\newcommand{\C}{\mathbb{C}}
\newcommand{\N}{\mathbb{N}}
\newcommand{\T}{\mathbb{T}}
\newcommand{\W}{\mathbb{W}}
\newcommand{\Z}{\mathbb{Z}}
\newcommand{\qum}[1]{\ensuremath{\widetilde{#1}}}
\newtheorem{Df}{Definition}[section]
\newtheorem{definition}[Df]{Definition}
\newtheorem{proposition}[Df]{Proposition}
\newtheorem{lemma}[Df]{Lemma}
\newtheorem{corollary}[Df]{Corollary}
\newtheorem{conjecture}[Df]{Conjecture}
\newcommand{\overarrowsplus}{\raisebox{-.6ex}{\tikz[{To[]}-,scale=.3]{	\draw (0,1) -- (1,0);\draw[white,-,line width= 3pt] (1,1) -- (0,0);	\draw (1,1) -- (0,0);}}}
\newcommand{\overarrowsminus}{\raisebox{-.6ex}{\tikz[{To[]}-,scale=.3]{	\draw (1,1) -- (0,0);\draw[white,-,line width= 3pt] (0,1) -- (1,0);	\draw (0,1) -- (1,0);}}}
\newcommand{\downcurvearrowleft}{\raisebox{1ex}{\scalebox{-1}{$\curvearrowright$}}}
\newcommand{\downcurvearrowright}{\raisebox{1ex}{\scalebox{-1}{$\curvearrowleft$}}}
\newcommand{\End}{\operatorname{End}}
\newcommand{\Hom}{\operatorname{Hom}}
\newcommand{\Id}{\operatorname{Id}}
\DeclareMathOperator{\Rep}{Rep}
\DeclareMathOperator{\rank}{rank}
\newcommand{\Uhsl}[1]{U_h(\mathfrak{sl}(#1))}
\def\Usl{{{U_h}(\mathfrak{sl}(2|1)})}
\def\e{\operatorname{e}}
\newcommand{\qn}[1]{{\left\{#1\right\}}}
\newcommand{\p}[1]{\ensuremath{\bar {#1}}}
\def\slto{\mathfrak{sl}(2|1)}
\def\slt{\mathfrak{sl}(2)}
\DeclareMathOperator{\SL}{SL}
\newcommand{\cat}{{\Rep_h^{tf}\slto}} 
\newcommand{\unit}{\ensuremath{{\mathrm{1}\mkern-4mu{\mathchoice{}{}{\mskip-0.5mu}{\mskip-1mu}}\mathrm{l}}}}
\newcommand{\coev}{\stackrel{\longrightarrow}{\operatorname{coev}}}
\newcommand{\ev}{\stackrel{\longrightarrow}{\operatorname{ev}}}
\newcommand{\tev}{\stackrel{\longleftarrow}{\operatorname{ev}}}
\newcommand{\tcoev}{\stackrel{\longleftarrow}{\operatorname{coev}}}
\newcommand{\g}{\ensuremath{\mathfrak{g}}}
\newcommand{\A}{\mathbb{D}} 
\title{Quantum invariants arising from $U_h\mathfrak{sl}(2|1)$ are q-holonomic}
\author{Jennifer Brown and Nathan Geer}
\begin{document}

\maketitle

\begin{abstract}
  We show that the quantum invariants arising from typical representations of the quantum group $\Uhsl{2|1}$ are q-holonomic.
  In particular, this implies the existence of an underlying field theory for which this family of invariants are partition functions.
\end{abstract}

\section{Introduction}
q-Holonomic systems were developed to apply techniques from D-modules to the study of hypergeometric functions \cite{Sabbah_1993}.
Such systems appear widely in quantum topology, where they describe the recursive or quasiperiodic behavior of various quantum knot invariants, see \cite{Garoufalidis_Le_2005,Garoufalidis_Le_2016,Garoufalidis_Lauda_Le_2018}.

As in the non-q-deformed setting, q-holonomic systems carry geometric meaning.
In the context of quantum topology, the q-Weyl modules underlying the q-holonomic systems are understood as quantizations of character varieties of three manifolds.

  The precise relationship between the recursion relations and quantum character varieties is the topic of the long-standing AJ conjecture \cite{Garoufalidis_2004, Le_2006}, which posits that the minimal order recurrence relation of the colored Jones polynomial is a quantization of the A-polynomial. 
  Even more geometric in flavor is the volume conjecture and its various generalizations \cite{Kashaev_1997,Gukov_2005,Dimofte_Gukov_2011,Gukov_Sulkowski_Awata_Fuji_2012}, which relate the asymptotic behavior of quantum invariants to the hyperbolic geometry of knot complements. 
  Both conjectures are stated for the colored Jones polynomial but expected to generalize to other families of quantum invariants.
  They both also imply that the invariants are q-holonomic, making this an important property to establish when considering a new family of invariants.

Our choice to focus on $\slto$ is motivated by recent work on Chern-Simons theories constructed from supergroups, such as those studied by Mikhaylov in \cite{Mikhaylov_2015}, see also \cite{Aghaei_Gainutdinov_Pawelkiewicz_Schomerus_2018,Mikhaylov_Witten_2015}.
Perturbative versions of these theories are proposed in \cite{Anghel_Geer_Patureau-Mirand_2021} and lead to quantum invariants of 3-manifolds.
In this paper we study the link invariant underlying the 3-manifold invariants of \cite{Anghel_Geer_Patureau-Mirand_2021}.
 The relationships between these supergroup theories and related topological invariants have been made precise in the case of $ \mathfrak{gl}(1|1)$ in the recent work \cite{Geer_Young_2022}.
 
Unlike the families of representations underlying the colored Jones polynomial and HOMFLY-PT polynomial, typical representations in the superalgebra setting are indexed by a combination of discrete and complex weights and often have vanishing quantum dimensions.
Therefore we must use the techniques developed in \cite{Brown_Dimofte_Garoufalidis_Geer_2020} to prove q-holonomicity in a non-semisimple setting.  
 Having established q-holonomicity, we open the door to a deformed version of a volume conjecture and provide a potential tool towards its study.

  \paragraph{Main result}
This paper is organised around proving the following result:
\smallskip

\textbf{Corollary \ref{cor:slto-is-q-holonomic}} \textit{The quantum invariant built from topologically free typical representations of $\Uhsl{2|1}$ is q-holonomic.}
\smallskip

\noindent At its base, the proof relies on the same closure properties of q-holonomic modules used in \cite{Garoufalidis_Le_2005,Brown_Dimofte_Garoufalidis_Geer_2020}.
Definition \ref{def:q-holonomic-ribbon} and Lemma \ref{lemma:q-hol-from-category} detail sufficient conditions for a collection of objects in a ribbon category to produce q-holonomic Reshetikhin-Turaev invariants.
Section \ref{sec:its-holonomic} is then dedicated to showing that the topologically free typical representations of $\Uhsl{2|1}$ meet these criteria.
This is achieved by writing down explicit formulas for the action of the generators of $\Uhsl{2|1}$, showing them to be composed of q-holonomic functions (Lemma \ref{lemma:q-holonomic-action}), then showing that the R-matrix (Lemma \ref{lemma:R-is-q-holonomic}), evaluation and coevaluation (Lemma \ref{lemma:duality-is-q-holonomic}), and modified dimension (Lemma \ref{lemma:mod-dim-is-q-holonomic}) all preserve this q-holonomicity.

\paragraph{Beyond $\slto$}
We conjecture that q-holonomicity holds for a larger class of Lie superalgebras, see Conjecture \ref{conj:lie-superalgs-holonomic} for a precise statement.
The conjecture is limited to certain well behaved Lie superalgebras for which a proof could follow the same logic and use the same tools as the $\slto$ case.
In this setting the main obstacle is the cumbersome representation theory. 

Further generalisation would likely require a fairly different approach.
This paper largely deals with q-holonomic functions, but the more fundamental object is a q-holonomic module of a q-Weyl algebra.
In the case of quantum invariants, the function is defined on the parameter space of some collection of representations, and the module is defined in relation to these parameters.

To move past this paradigm we need a new object to endow with an action of the q-Weyl algebra.
One candidate is the skein module of the knot complement, which can be q-holonomic in the sense of \cite{Kashiwara_Schapira_2011} as a module for the skein algebra of the torus when the underlying ribbon category is a deformation of a representation category of a reductive group. Recent work \cite{Jordan_Romaidis_2025} has established this is the case for the ribbon category $\Rep_q\SL_2\C$. 

An immediate concern when connecting back to recursion relations is that the skein algebra of the torus is in general not a q-Weyl algebra, or even a quantum torus, and therefore does not closely resemble some algebra of q-difference operators. Section \ref{sec:quantum-torus-from-skein-thy} outlines an approach to modelling the action of q-difference operators using parabolic reduction.

Going further, when the underlying ribbon category is not of the form $\Rep_qG$, then the existing definitions of q-holonomicity do not apply. In this setting a new formulation is needed. One approach could be to take inspiration from Sabbah's definition of homological codimension and formulate q-holonomicity in terms of vanishing $\mathrm{Ext}$ groups.
The author's understanding is that at this question is completely open at the time of publication.

\subsection{A quantum torus from skein theory}\label{sec:quantum-torus-from-skein-thy}
In this section we will use some insights from skein theory, following the ideas of \cite{Gelca_2001,Frohman_Gelca_LoFaro_2001}, to explore the origins of the q-holonomic systems associated with the $\slto$ invariant defined in Section \ref{sec:QuantumInvariants}. 
Some crucial missing ingredients prevent us from drawing a precise line between the action of a skein algebra on a skein module and the various q-Weyl algebra actions considered here.
A full description of the relationship would represent important progress towards a generalised version of the AJ conjecture \cite{Garoufalidis_2004}, and will be left to other works.
In the meantime we explain what is already known or suspected and what is still missing.

We expect that the most topologically-motivated and natural quantum torus and module will be
\begin{equation*}
  \mathrm{SkAlg}_{\Rep_h^{t\!f}\!\mathfrak{t}} (\T) \quad\text{and}\quad \mathrm{SkAlg}_{\Rep_h^{t\!f}\!\slto}(S^3\setminus K),
\end{equation*}
where $\mathfrak{t} \simeq \C[ h_1, h_2 ]$ is the universal Cartan for $\slto$, $K \subset S^3$ is a knot, and $\Rep^{t\!f}_h\!\mathfrak{t}$ is a certain subcategory of the topologically free $U_h\mathfrak{t}$-modules, see Definition \ref{def:topologically-free}.
Note that the quantized enveloping algebra of $\mathfrak{t}$ has an R-matrix given by \eqref{eq:K}, and hence the braiding in the associated category is of the form $v\otimes w \mapsto q^{\omega_{v,w}} w\otimes v$ for some scalars $\omega_{v,w}$.
It's thanks to this that the associated skein algebra will be a quantum torus.
One could expect that the q-Weyl algebra will be a proper sub-torus of this skein algebra, though it is possible that one first needs to deform the braiding.

To describe the action one needs to use parabolic defects for Lie superalgebras, at least implicitly.
Defects for general skein theories are defined in \cite{Brown_Jordan_2025}, following work on quantum decorated character stacks and stratified factorization homology in \cite{Jordan_Le_Schrader_Shapiro_2021,Ayala_Francis_Tanaka_2017}, but the specific construction of parabolic defects for Lie superalgebras has not been undertaken.

The following questions must be resolved to relate a skein theoretic action to the standard q-Weyl algebra action considered here.
\begin{description}
  \item[Question 1:] Given a Borel $\mathfrak{b} \subset \slto$, what's the corresponding surface defect between the $\Rep_qT$ and $\Rep_q SL(2|1)$ theories?
\end{description}
This defect would endow the $\slto$ skein module of the knot complement with an action of the $\mathfrak{t}$ skein algebra of the torus.
\begin{description}
  \item[Question 2:] Does the following procedure recreate the action of the q-Weyl algebra on the $\slto$ quantum invariant?
    \begin{enumerate}
      \item Endow the $\slto$ skein module of a knot complement with an action by the $\mathfrak{t}$ skein algebra of a torus by inserting a parabolic defect parallel to the torus boundary.
      \item Take a relative tensor product with the $\slto$ skein module of the solid torus with a $V(a_1,a_2)$-colored skein along its core and a parabolic defect parallel to its boundary.
      \item Restrict to a subcategory of modules where the two parabolic defects are inverses and then remove them.
      \item Relate the resulting skein in $S^3$ to a scalar multiple of the empty skein, the scalar being (up to normalisation) the quantum invariant of the knot.
    \end{enumerate}
\end{description}
Question 1 is settled for $\mathfrak{sl}(2)$ and unresolved for other Lie algebras and superalgebras.
In the $\mathfrak{sl}(2)$ case, together with the constructions of \cite{Brown_Jordan_2025}, Question 2 presents a fairly complete proof of the AJ conjecture for arbitrary knots.


\subsection{Acknowledgements}

JB is jointly funded by the Simons Foundation award 888988 as part of the Simons Collaboration on Global Categorical Symmetry, and the National Sicence Foundation grant DMS-2202753 and gratefully acknowledges the European Research Council under the European Union's Horizon 2020 research programme under grant agreement No 948885.  NG is partially supported by National Science Foundation grant DMS-2104497.

\section[The Lie superalgebra sl(2|1)]{The Lie superalgebra $\slto$}
We give a brief overview to set notation and recommend \cite{Kac_1977} for a careful treatment of the theory of Lie superalgebras.
Let $\Z_2 = \{\p 0, \p 1\}$ be the additive group of order two.
A \emph{super-space} is a $\Z_{2}$-graded vector space $V=V_{\p 0}\oplus V_{\p 1}$ over $\C$.
The parity of a homogeneous element $x \in V_{\p i}$ is denoted by $\p x=\p i\in \Z_{2}$ and the elements of $V_{\p 0}$ (resp.\ $V_{\p 1}$) are called even (resp.\ odd).
A \emph{morphism of super vector spaces of degree $\p d \in \Z_{2}$} is a linear map $f: V \rightarrow W$ which satisfies $\overline{f(v)} = \p v + \p d$ for each homogeneous $v \in V$.
An algebra on a superspace is a \emph{superalgebra} if its usual structure maps respect the $\Z_2$-grading.
A (left) \emph{module} over a superalgebra $A$ is a super vector space $M$ together with a superalgebra homomorphism $A \rightarrow \End_{\C}(M)$ of degree $\p 0$. 
A \emph{Lie superalgebra} is a super-space $\g=\g_{\p 0} \oplus \g_{\p 1}$ with a super-bracket $[\:
, ] :\g^{\otimes 2} \rightarrow \g$ that preserves the $\Z_{2}$-grading, is super-antisymmetric ($[x,y]=-(-1)^{\p x \p y}[y,x]$), and satisfies the super-Jacobi identity.

Let $A=(a_{ij})$ be the $\slto$ Cartan matrix, i.e. the $2\times 2$ matrix given by $a_{11}=2$, $a_{12}=a_{21}=-1$ and $a_{22}=0$.
\begin{definition}
Let $\slto$ be the Lie superalgebra generated by $h_{i}$, $e_{i},$ and
$f_{i}$, $i=1,2$, where $h_{1}$, $h_{2}$, $e_{1}$ and $f_{1}$ are
even while $e_{2}$ and $f_{2}$ are odd.
The generators satisfy the relations
\begin{align*}
   [h_{i}, h_{j}]&=0,   & [h_{i}, e_{j}] &=a_{ij}e_{j},   &
   [ h_{i},f_{j}] &=-a_{ij} f_{j} &   [e_{i},f_{j}] &=\delta_{ij}h_{i},
\end{align*}
\begin{align*}
 [e_{2},e_{2}] &=[f_{2},f_{2}] =0, & [e_{1},[e_{1},e_{2}]]
 &=[f_{1},[f_{1},f_{2}]]=0.
\end{align*}
\end{definition}
 
Let $h$ be an indeterminate and $q=e^{h/2}$.
We adopt the following standard notations:
\begin{equation*}
  q^z=\e^{zh/2},\quad\qn z=q^z-q^{-z},\quad\text{and}\quad [z] = \frac{\qn z}{\qn 1}
\end{equation*}
In this paper we work with $\Usl$ instead of $U_q(\slto)$ because the standard R-matrix, see \eqref{eq:Rcheck} and \eqref{eq:K}, is well defined as an element in $\Usl \bigotimes \Usl$ but not in $U_q(\slto) \bigotimes U_q(\slto)$.
\begin{definition}\label{D:Usl}
  Let $\Usl$ be the $\C[[h]]$-Hopf
  superalgebra generated by the even elements $h_1,h_2,E_1,$ and $F_1$ together with the odd elements $E_2$ and $F_2$. These are subject to the relations:
\begin{align*}
 [h_{i},h_{j}] &=0, & [h_{i},E_{j}]=&a_{ij}E_{j}, &
 [h_{i},F_{j}]=&-a_{ij}F_{j},
\end{align*}
\begin{align*}
 [E_{i},F_{j}]=&\delta_{i,j}\frac{q^{h_{i}}-q^{-h_{i}}}{q-q^{-1}},  &
 E_{2}^{2}=&F_{2}^{2}=0, 
 \end{align*}
\begin{align*}
  E_{1}^{2}E_{2}-(q+q^{-1})E_{1}E_{2}E_{1}+E_{2}E_{1}^{2}&=0 &
  F_{1}^{2}F_{2}-(q+q^{-1})F_{1}F_{2}F_{1}+F_{2}F_{1}^{2}&=0
\end{align*}
where $[,]$ is the super-commutator given by $[x,y]=xy-(-1)^{\p x \p
 y}yx$. 
The coproduct, counit, and antipode are given by
\begin{align*}
\label{}
   \Delta({E_{i}})= & E_{i}\otimes 1+ q^{-h_{i}} \otimes E_{i}, &
   \epsilon(E_{i})= & 0 & S(E_{i})=&-q^{h_{i}}E_{i}\\
   \Delta({F_{i}}) = & F_{i}\otimes q^{h_{i}}+ 1 \otimes F_{i}, &
   \epsilon(F_{i})= &0 & S(F_{i})=&-F_{i} q^{-h_{i}}\\
   \Delta({h_{i}}) = & h_{i} \otimes 1 + 1\otimes h_{i}, & \epsilon(h_{i})
   = & 0 & S(h_{i})= &-h_{i}.
 \end{align*}
\end{definition}

\subsection[Representations of sl(2|1) and Uhsl(2|1)]{Representations of $\slto$ and $\Uhsl{2|1}$}

In this section we compare the representation theory of $\slto$ and $\Uhsl{2|1}$. 
In particular, we recall the category of \emph{topologically free modules} from which the quantum invariants considered in this paper are constructed.

\begin{definition}\label{def:typical-slto}
  Let $a:=(a_{1},a_{2})\in\Z_{\geq 0}\times \C$. 
We denote by $V(a_1,a_2)$ the irreducible highest weight $\slto$-module with a highest weight vector $v_{0}$ whose parity is even satisfying 
\begin{equation}
  h_{i}.v_{0}=a_{i}v_{0} \qquad \text{and} \qquad e_{i}v_{0}=0
\end{equation}
obtained as the quotient of an infinite dimensional $\slto$ module by a unique maximal sub-module, see \cite[p. 611]{Kac_1978}.
Such a $\slto$ highest weight module is called \emph{\bf typical} if it splits in any finite-dimensional representation, i.e. if $V(a_1,a_2)$ is a sub-module or factor-module of another finite-dimensional $\slto$ module, then it is a direct summand.
\end{definition}
In \cite[Ex 1, p 620]{Kac_1978} this splitting condition is reduced to a system of inequalities on the weights, which for $\slto$ is $a_{1}+a_{2}+1\neq 0$ and $a_{2}\neq 0$.
By requiring $a_1$ to be an integer we've restricted ourselves to just finite dimensional representations \cite[Prop 2.3]{Kac_1978}.
Note that in the same article it's proven that typical modules are projective and injective in the category of finite-dimensional $\slto$-modules. 

\subsubsection[Typical Uhsl(2|1)-modules]{Typical $\Uhsl{2|1}$-modules}\label{sec:TypicalModules}
Next we consider $\Uhsl{2|1}$.
We work over $\C[[h]]$, which is not a field, and so work with the following definition of simplicity, following \cite{Geer_Patureau-Mirand_2018}.

\begin{definition}\label{def:simple}
  A $\Uhsl{2|1}$ module $W$ will be called \emph{\bf simple} if \begin{equation}
    \End_{\Uhsl{2|1}}(W) \simeq \C[[h]].
  \end{equation}
\end{definition}
Note that this \emph{does not imply irreducibility}. For instance $hW \subset W$ is a submodule regardless of the endomorphism ring.
We restrict our attention to the topologically free representations, where the role of the base ring $\C[[h]]$ is well controlled.

\begin{definition}\label{def:topologically-free}
  A $\Usl$-module $W$ is called \emph{\bf topologically free of finite rank} if it is isomorphic as a $\C[[h]]$-module to $V[[h]] := V\otimes \C[[h]]$, where $V$ is a finite-dimensional $\slto$-module.
Let $\cat$ denote the category of topologically free of finite rank $\Usl$-modules where all morphisms are even. 
\end{definition}
As a topological algebra, $\Usl$ is isomorphic to the quantized enveloping superalgebra of $\slto$ \cite{Geer_EKquantization_2006}.
In \cite[Thm 1.2]{Geer_2007} this isomorphism is used to construct a highest weight $\Uhsl{2|1}$-module $\qum{V}(a_1,a_2)$ which deforms $V(a_1,a_2)$.
In particular, it is shown that
\begin{equation}
  \qum{V}(a_1,a_2) \simeq V(a_1,a_2)[[h]] \quad \text{as $\C[[h]$-modules.}
\end{equation}
Note that if $V(a_1,a_2)$ typical (and hence projective and injective) then its h-adic counterpart $\qum{V}(a_1,a_2)$ is likewise both projective and injective.
Similarly, if $V(a_1,a_2)$ is simple, then
\begin{equation}
  \End_{\Uhsl{2|1}}(\qum{V}(a_1,a_2)) \simeq \End_{\slto}(V(a_1,a_2))[[h]] \simeq \C[[h]],
\end{equation}
meaning $\qum{V}(a_1,a_2)$ is likewise simple.
The topologically free representation theory of $\Usl$ is thus parallel to that of the Lie superalgebra $\slto$.

The $\Uhsl{2|1}$-module $\qum{V}(a_1,a_2)$ is called \emph{typical} whenever the associated $\slto$-module $V(a_1,a_2)$ is likewise typical.
Here we give an explicit presentation of typical modules of $\Uhsl{2|1}$.
These are the family of modules used in defining the knot invariant considered in this article.

The typical module $\qum{V}(a_1,a_2)$ has dimension $4a_1 + 4$ and is generated by a distinguished even highest weight vector $v$.
It is uniquely determined by simplicity and the weights $h_1 v = a_1 v$ and $h_2 v = a_2 v$.
We introduce the following notation, motivated by the decomposition \eqref{eq:sl2-decomposition} described below.
\begin{align}
  v_{0,0}  &:= v, &
  v_{-1,1} &:= \left(F_2 F_1 - \frac{[a_1]}{[a_1+1]} F_1 F_2\right)v, &
  v_{1,0}  &:= F_2 v, &
  v_{0,1}  &:= F_2 F_1 F_2 v.
\end{align}
We take as a basis 
\begin{equation}\label{eq:typical-module-basis}
\begin{aligned}
  & F_1^kv_{0,0},   && k = 0,\ldots,  a_1. \\
  & F_1^k v_{-1,1}, && k = 0,\ldots,  a_1 - 1.\\
  & F_1^k v_{1,0},  && k = 0, \ldots, a_1 + 1. \\
  & F_1^k v_{0,1},  && k = 0, \ldots, a_1.
\end{aligned}
\end{equation}
To see that this is indeed a spanning set, we note that the action of $F_1$ and $F_2$ generate $\qum{V}(a_1,a_2)$ and that together $F_2^2 = 0$ and $(q+q^{-1}) F_1 F_2 F_1 = F_1^2 F_2 + F_2 F_1^2$ imply that any product in $\Uhsl{2|1}$ with three or more $F_2$'s is zero.
They also imply that $F_2 F_1 F_2 F_1 = F_1 F_2 F_1 F_2$. 
Furthermore, $\langle h_1, E_1, F_1 \rangle \subset \Uhsl{2|1}$ gives a copy of $\Uhsl{2}$.
As a topologically free finite rank $\Uhsl{2}$-module, $\qum{V}(a_1,a_2)$ decomposes as
\begin{equation}\label{eq:sl2-decomposition}
  \qum{V}(a_1,a_2)[[h]] \simeq V_{a_1}[[h]] \oplus V_{a_1+1}[[h]] \oplus V_{a_1+1}[[h]]\oplus V_{a_1+2}[[h]]
\end{equation}
where $V_n[[h]]$ is the topologically free $\Uhsl{2}$ module that deforms the unique simple $n$-dimensional $\slt$-module $V_n$.
Our choice of basis \eqref{eq:typical-module-basis} is motivated by this decomposition, since each of the four $v_{\epsilon_1,\epsilon_2}$ is a highest weight vector for the induced $\Uhsl{2}$-action.
The subscripts track the $h_1,h_2$ weights, i.e. $h_i v_{\epsilon_1,\epsilon_2} = (a_i + \epsilon_i)v_{\epsilon_1,\epsilon_2}$.

\paragraph{$h_1$ and $h_2$ actions}
Let $k \geq 0$. The $h_1$ action is
    \begin{equation}
      \begin{aligned}
        h_1 &\cdot F_1^kv_{0,0} = (a_1 -2k) F_1^k v_{0,0},\\
        h_1 &\cdot F_1^k v_{-1,1} = (a_1 -2k-1) F_1^k v_{-1,1},\\
        h_1 &\cdot F_1^k v_{1,0} = (a_1 - 2k+1)F_1^k v_{1,0},\\
        h_1 &\cdot F_1^k v_{0,1} = (a_1 - 2k) F_1^k v_{0,1}.
    \end{aligned}
  \end{equation}
  While the $h_2$ action is
  \begin{equation}\label{eq:h2-action}
      \begin{aligned}
        h_2 &\cdot F_1^kv_{0,0} = (a_2 + k) F_1^k v_{0,0}, \\
        h_2 &\cdot F_1^k v_{-1,1} = (a_2 + k+1)F_1^k v_{-1,1}, \\
        h_2 &\cdot F_1^k v_{1,0} = (a_2 + k)F_1^k v_{1,0}, \\ 
        h_2 &\cdot F_1^k v_{0,1} = (a_2 + k + 1) F_1^k v_{0,1}.
    \end{aligned}
  \end{equation}

  \paragraph{$E_1$ and $E_2$ actions}
  Note $E_1 v_{0,0} = E_2 v_{0,0} = 0$ by assumption. Direct computation shows that $E_1 v_{1,0} = E_1 v_{0,1} = 0$, while $E_2 v_{1,0} = [a_2] v_{0,0}$, and $E_2 v_{0,1} = [a_2]F_1v_{0,0} + [a_2-1] F_1 v_{1,0}$. 
  Let $k \geq 1$. The $E_1$ action is
    \begin{equation}
    \begin{aligned}
      E_1 &\cdot F_1^k v_{0,0} = [k][a_1+1-k]F_1^{k-1}v_{0,0},\\
      E_1 &\cdot F_1^k v_{-1,1} = [k][a_1-k] F_1^{k-1} v_{-1,1},\\ 
      E_1 &\cdot F_1^k v_{1,0} = [k][a_1+2-k] F_1^{k-1} v_{1,0},\\
      E_1 &\cdot F_1^k v_{0,1} = [k][a_1+1-k] F_1^{k-1} v_{0,1}.
    \end{aligned}
  \end{equation}
  The $E_2$ action is
    \begin{equation}
    \begin{aligned}
      E_2 &\cdot F_1^k v_{0,0} =  0, \\
      E_2 &\cdot F_1^k v_{-1,1} = \left([a_2+1] - [a_2]\frac{[a_1]}{[a_1+1]}\right) F_1^{k+1} v_{0,0}\\
      E_2 &\cdot F_1^k v_{1,0} = [a_2] F_1^k v_{0,0}, \\
      E_2 &\cdot F_1^k v_{0,1} = [a_2] F_1^{k}F_2F_1 v + [a_2\! +\!1 ] F_1^{k+1} v_{1,0} = [a_2] F_1^k v_{-1,1} + \left(\frac{[a_1][a_2]}{[a_1+1]} + [a_2+1]\right)F_1^{k+1}v_{1,0}. 
    \end{aligned}
  \end{equation}
  For $E_2 \cdot F_1^k v_{0,1}$, we've used that $F_2 F_1 v = v_{-1,1} + \frac{[a_1]}{[a_1+1]} F_1 v_{1,0}$. 

  \paragraph{$F_1$ and $F_2$ actions}
  The $F_1$ action is simply $F_1\cdot F_1^{k}v_{\epsilon_1,\epsilon_2} = F_1^{k+1}v_{\epsilon_1,\epsilon_2}$. 
  It follows from the decomposition \eqref{eq:sl2-decomposition} that
  \begin{equation}
    F_1^{a_1 +1} v_{0,0} = F_1^{a_1}v_{-1,1} = F_1^{a_1+2}v_{1,0} = F_1^{a_1+1} v_{0,1} = 0.
  \end{equation}
 
  The $F_2$ action is more involved.
  By definition $F_2\cdot v_{0,0} = v_{1,0}$, while a computation shows that $F_2 \cdot F_1 v_{0,0} = v_{-1,1} + \frac{[a_1]}{[a_1+1]} F_1 v_{1,0}$.
  We also have $F_2 \cdot v_{-1,1} = -\frac{[a_1]}{[a_1+1]} v_{0,1}$ and $F_2 v_{1,0} =0$.
In what follows $P_n$ is a Laurent polynomial in $q$, determined recursively by $P_0 = 1$, $P_1 = q + q^{-1}$, and $P_n = (q+q^{-1})P_{n-1} - P_{n-2}$.
The $F_2$ action on the rest of the generators is
\begin{equation*}
  \begin{aligned}
    F_2 &\cdot F_1^k v_{0,0} = \left(P_{k-1}F_1^{k-1}F_2F_1 -P_{k-2} F_1^k F_2\right) v = P_{k-1} F_1^{k-1}v_{-1,1} + \left(P_{k-1} \frac{[a_1]}{[a_1+1]} - P_{k-2}\right)F_1^k v_{1,0} &k\geq 2\\
    F_2 &\cdot F_1^k v_{-1,1} = \left(P_{k-1} - \frac{[a_1]}{[a_1+1]} P_k \right)F_1^k v_{0,1} & k \geq 1\\
    F_2 &\cdot F_1^k v_{1,0} = P_{k-1} F_1^{k-1} v_{0,1}, &k\geq 1\\
    F_2 &\cdot F_1^k v_{0,1} = 0. &k \geq 0
  \end{aligned}
\end{equation*}

\subsubsection[The ribbon structure]{The ribbon structure on $\cat$}\label{sec:RibbonStructure}
In Section \ref{sec:QuantumInvariants} we will need that $\cat$ is a \emph{ribbon category}, that is, a braided rigid monoidal category with a natural transformation $\theta : \mathrm{id} \Rightarrow \mathrm{id}$ such that
\begin{equation}\label{eq:twist-constraints}
  \theta_{V\otimes W} = \left(\theta_{V} \otimes \theta_{W}\right) c_{W,V} c_{V,W} \quad\text{and}\quad \left(\theta_V\right)^* = \theta_{V^*}.
\end{equation}
We will describe the ribbon structure on $\cat$.
	
	Since $\Usl$ is a Hopf algebra, $\cat$ is a monoidal category with unit $\unit$ given by the trivial module $\C[[h]]$.  
	Let $\qum{V}=V[[h]]$ be an object of $\cat$ where $V$ is a finite dimensional $\slto$-module.
	The duality is given on objects by $V[[h]]^* = V^*[[h]]$.
  The duality morphisms are $\C[[h]]$-linearly extended from the underlying evaluation and coevaluation maps: 
	 let $\{v_i\}$ be a homogeneous basis of $V$ and $\{v_i^*\}$ be
	the dual basis of $V^*=\Hom_\C(V,\C)$, that is $v_i^*(v_j)=\delta_{i,j}$.
  The following morphisms are defined by $\C[[h]]$-linear extension to give a pivotal structure on $\cat$:
  \begin{equation}\label{eq:pivotal-structure}
  \begin{aligned}
    \coev_{\qum{V}} :\;& \C[[h]] \rightarrow \qum{V}\otimes \qum{V}^{*}, & 1 &\mapsto \sum v_i\otimes v_i^*, & 
    \ev_{\qum{V}}:\; & \qum{V}^*\otimes \qum{V}\rightarrow \C[[h]] , & f\otimes x &\mapsto f(x),\\
    \tcoev_{\qum{V}} :\;& \C[[h]]  \rightarrow \qum{V}^*\otimes \qum{V}, & 1 &\mapsto \sum (-1)^{\p {v}_i} v_i^*\otimes q^{2h_2}v_i, & 
    \tev_{\qum{V}}:\; & \qum{V}\otimes \qum{V}^*\rightarrow \C[[h]], & x\otimes f &\mapsto 
	(-1)^{\p x \p f} f(q^{-2h_2}x).
	\end{aligned}
\end{equation}

Here $x\in V$ and $f\in V^*$ are homogeneous elements.  
	
Khoroshkin-Tolstoy \cite{Khoroshkin_Tolstoy_1991} and Yamane \cite{Yamane_1994} showed that
$\Usl$ has an explicit $R$-matrix that decomposes as $R=\check{R}K$.
Let $E'=E_{1}E_{2}-q^{-1}E_{2}E_{1}$ and $F'=F_{2}F_{1}-qF_{1}F_{2}$.
Then we can write
\begin{equation}
\label{eq:Rcheck}
\check{R}=\exp_{q}(\qn1 E_{1}\otimes F_{1})\exp_{q}(-\qn1 E'\otimes F')\exp_{q}(-\qn1 E_{2}\otimes F_{2}), 
\end{equation}
\begin{equation}
\label{eq:K}
K=q^{-h_{1}\otimes h_{2}-h_{2}\otimes h_{1} -2h_{2}\otimes h_{2}}.
\end{equation}
Here we've used the standard notations $(k)_{q}:=(1-q^{k})/(1-q)$,
$(n)_{q}!:=(1)_{q}(2)_{q}...(n)_{q}$ and $\exp_{q}(x):= \sum_{n=0}^{\infty}x^{n}/(n)_{q}!$.  
By convention $(0)_q! = 1$. 
The identities $\exp_{q}(x)\exp_{q^{-1}}(-x)=1$ and $\exp_{q^{-1}}(x)\exp_{q}(-x)=1$ (see e.g. \cite[(5.2)]{Khoroshkin_Tolstoy_1991} imply that 
\begin{equation}\label{eq:R-inverse}
  R^{-1}=K^{-1}\exp_{q^{-1}}(\qn1 E_{2}\otimes F_{2})\exp_{q^{-1}}(\qn1 E'\otimes F')\exp_{q^{-1}}(-\qn1 E_{1}\otimes F_{1}).
\end{equation}
 Thus, the category $\cat$ has a braiding $\{c_{\qum{V},\qum{W}}\}_{\qum{V},\qum{W}\in \cat}$:
$$
c_{\qum{V},\qum{W}}:\qum{V}\otimes \qum{W}\to \qum{W} \otimes \qum{V}\text{ given by }
v\otimes w\mapsto \tau(R(v\otimes w))
$$
where $\tau$ is the super flip map defined by $\tau(x\otimes y)=(-1)^{\p x \p y}y\otimes x$ for homogeneous $x$ and $y$.  

We finish by showing that the braiding and duality structure are compatible. 
\begin{proposition}\label{P:catBraided}
The above pivotal structure and braiding give a ribbon structure on $\cat$. 
\end{proposition}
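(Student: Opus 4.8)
The plan is to build the ribbon structure from three ingredients — the braiding from $R=\check R K$, the pivotal structure \eqref{eq:pivotal-structure}, and a twist $\theta$ extracted from these — and then verify the two constraints in \eqref{eq:twist-constraints}. The braiding part is essentially inherited: Khoroshkin–Tolstoy's and Yamane's $R$ is a universal $R$-matrix, and because we work over $\C[[h]]$ the infinite products in \eqref{eq:Rcheck} and the exponential in \eqref{eq:K} converge $h$-adically, so $c_{\qum V,\qum W}=\tau\circ R$ is a well-defined natural isomorphism on $\cat$ satisfying both hexagon axioms. For the pivotal part one checks that the element $g:=q^{-2h_2}$ appearing in \eqref{eq:pivotal-structure} is group-like, $\Delta(g)=g\otimes g$, and implements the square of the antipode, $S^2(x)=gxg^{-1}$; this reduces to a one-line computation on the generators using Definition \ref{D:Usl} (for instance $S^2(E_1)=q^{h_1}E_1q^{-h_1}=q^{2}E_1=gE_1g^{-1}$, $S^2(E_2)=E_2=gE_2g^{-1}$, and similarly on $F_i,h_i$). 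Together with the parity signs $(-1)^{\p{v}_i}$ and $(-1)^{\p x\p f}$ forced by the super-flip, this is exactly what makes $\tcoev,\tev$ a pivotal duality, so the pivotal structure may be taken as given.

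Next I would introduce the twist as the categorical curl
\begin{equation*}
  \theta_{\qum V}=(\mathrm{id}_{\qum V}\otimes\tev_{\qum V})\circ(c_{\qum V,\qum V}\otimes\mathrm{id}_{\qum V^*})\circ(\mathrm{id}_{\qum V}\otimes\coev_{\qum V}),
\end{equation*}
which, since $\cat$ is the category of modules over a quasitriangular Hopf superalgebra with pivot $g$, is the action of (the inverse of) the ribbon element $v\in\Usl$ with $g=u\,v^{-1}$, $u$ the Drinfeld element. Naturality of $\theta$ is automatic (it is a composite of natural transformations, equivalently the action of a central element), and invertibility follows from invertibility of $v$, equivalently from the opposite curl. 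Then the two conditions are checked in turn. The balancing identity $\theta_{\qum V\otimes\qum W}=(\theta_{\qum V}\otimes\theta_{\qum W})\,c_{\qum W,\qum V}c_{\qum V,\qum W}$ is a string-diagram manipulation: the curl around $\qum V\otimes\qum W$ is pulled apart into a curl on each factor plus the double braiding linking them, using only naturality of $c$ and the hexagon axioms; at the level of the Hopf superalgebra this is the relation $\Delta(v)=(\tau(R)R)^{-1}(v\otimes v)$. The duality identity $(\theta_{\qum V})^{*}=\theta_{\qum V^{*}}$ is where the pivotal structure is used: applying the pivotal isomorphism $\qum V^{**}\simeq\qum V$ and naturality of $c$, the curl defining $\theta_{\qum V^{*}}$ is identified with the transpose of the curl defining $\theta_{\qum V}$; Hopf-theoretically this is $S(v)=v$.

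The main obstacle is sign bookkeeping in the $\Z_2$-graded setting. Each of the steps above is standard for ordinary ribbon Hopf algebras, but here the duality maps of super vector spaces carry intrinsic signs (the $(-1)^{\p{v}_i}$ and $(-1)^{\p x\p f}$ in \eqref{eq:pivotal-structure}), $\tau$ is the super-flip, and the interaction of these signs with the $R$-matrix decomposition \eqref{eq:Rcheck} must be tracked through the diagram chases. Concretely, one must confirm that the central element governing $\theta$ via the curl is precisely $u\,v^{-1}=q^{-2h_2}$ — matching the exponent in $\tev$ and $\tcoev$ — rather than a differently normalized group-like such as $q^{2\rho}$ with $\rho$ the ordinary (non-super) Weyl vector; the sign conventions in \eqref{eq:pivotal-structure} are exactly what is needed for this match. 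Once the pivot is pinned down and the parity signs are accounted for, verifying \eqref{eq:twist-constraints} is routine.
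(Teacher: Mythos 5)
Your overall strategy --- exhibit $\cat$ as the module category of a ribbon Hopf superalgebra by producing a pivot $g=q^{-2h_2}$ and a ribbon element $v$ with $g=uv^{-1}$ --- is a legitimate alternative to the paper's argument, and your verifications of the braiding, of the group-like/pivot property of $q^{-2h_2}$, and of the balancing identity $\theta_{\qum{V}\otimes\qum{W}}=(\theta_{\qum{V}}\otimes\theta_{\qum{W}})\,c_{\qum{W},\qum{V}}c_{\qum{V},\qum{W}}$ match what the paper does (it likewise dispatches the balancing identity by naturality and Yang--Baxter). The gap is in the final step. You claim that the duality constraint $(\theta_{\qum{V}})^*=\theta_{\qum{V}^*}$ follows by ``applying the pivotal isomorphism $\qum{V}^{**}\simeq\qum{V}$ and naturality of $c$,'' with $S(v)=v$ as its Hopf-theoretic translation, and that the rest is routine sign bookkeeping. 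That cannot be right as stated: in any braided pivotal category the diagram manipulation you describe identifies $(\theta_{\qum{V}})^*$ with the \emph{opposite} curl on $\qum{V}^*$ (the twist built from $\tcoev$ and $\ev$ instead of $\coev$ and $\tev$), so the constraint $(\theta_{\qum{V}})^*=\theta_{\qum{V}^*}$ is exactly the nontrivial assertion that the two curls coincide --- equivalently $S(v)=v$. If it followed from naturality and pivotality alone, every braided pivotal category would be ribbon, which is false. You never verify $S(v)=v$, and doing so directly from the explicit $R$-matrix \eqref{eq:Rcheck}--\eqref{eq:K} is not routine.

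The paper closes exactly this gap by a different mechanism: it invokes \cite[Theorem 9]{Geer_Patureau-Mirand_2018}, which says that for a generically $\C/\Z$-semisimple braided pivotal category it suffices to check $(\theta_X)^*=\theta_{X^*}$ on generic simple objects. After proving generic semisimplicity (every object of $\cat_g$ with $g\notin\Z/\Z$ is a direct sum of typical modules, using their projectivity and injectivity), the constraint reduces to a scalar identity: $\theta_{\qum{V}(a_1,a_2)}=q^{-2a_2(a_1+a_2+1)}\Id$, $\qum{V}(a_1,a_2)^*\simeq\qum{V}(a_1,-a_1-a_2-1)$, and the two scalars agree. To complete your route you must either actually carry out the computation $S(v)=v$ for the explicit ribbon element of $\Usl$, or import this reduction to generic simple objects and do the scalar check; as written, the one step that distinguishes ribbon from merely braided pivotal is asserted rather than proved.
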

	It is well known that $\cat$ is a ribbon category, see for example  \cite{Geer_2005}.
  We give a proof of this fact by describing the ribbon structure in terms of left/right dualities and a braiding.
  Our approach relies on \cite[Theorem 9]{Geer_Patureau-Mirand_2018}, where it is shown that a pivotal braided category is ribbon if it satisfies certain compatibility constraints for a subset of objects on the natural twist morphism defined from the braiding and dualities.
\begin{proof}
In order for \cite[Theorem 9]{Geer_Patureau-Mirand_2018} to apply, $\cat$ must be a generically $\C/\Z$-semisimple pivotal braided category.
Having previously established it's pivotal and braided, it remains to prove the generic semisimplicity condition.

In the language of \cite[\S 1.6]{Geer_Patureau-Mirand_2018}, the category $\cat$ is $\C/\Z$-graded as follows: for $g\in \C/\Z$ let $\cat_g$ be the full subcategory of $\cat$ consisting of those modules whose $h_2$-weights are all equal to $g$ mod $\Z$.
We use the fact that $h_2$ acts semi-simply on each $X$ in $\cat$, decomposing it into $h_2$-weight spaces.
These weights differ by integers, see \eqref{eq:h2-action}.

Next, we show that $\cat$ is generically $\C/\Z$-semisimple, i.e. that the $g$-graded component $\cat_g$ is semisimple whenever $g \in (\C/\Z) \setminus (\Z/\Z)$.
The word ``generic'' refers to the fact that the non-semisimple coset $\Z/\Z$ is symmetric and small, i.e. $-\Z/\Z=\Z/\Z$ and for any $g_1,...,g_n\in \C/\Z$ we have $\bigcup_i (g_i +\Z/\Z) \neq \C/\Z$.

Let $X$ be an object in $\cat_g$, with $g \notin \Z/\Z$.
There exists a highest weight vector $v\in X$.
Since $g \notin \Z/\Z$, the $h_1$ and $h_2$ weights of $v$ satisfy the inequalities of Section \ref{sec:TypicalModules} and so $v$ generates some typical module $\qum{V}(a_1,a_2)\subseteq X$, where $a_2 \equiv g \mod \Z$.
Since typical modules are both projective and injective, $X \simeq \qum{V}(a_1,a_2)\oplus W$ for some $W \in \cat_g$.
Repeating this process on $W$ we see that $X$ is isomorphic to a direct sum of typical modules in $\cat_g$.
We conclude that $\cat_g$ is semisimple whenever $g \notin \Z/\Z$.
Hence $\cat$ is a generically $\C/\Z$-semisimple pivotal braided category.

Next, define a family of natural automorphisms indexed by objects $X$ of $\cat$:
$$\theta_X=(\Id_X \otimes \tev_X)(c_{X,X}\otimes \Id_{X^*})(\Id_X \otimes \coev_X):X\to X.$$
The left-most equality in \eqref{eq:twist-constraints} follows from naturality of the braiding and the Yang-Baxter equation.
By \cite[Theorem 9]{Geer_Patureau-Mirand_2018} the category $\cat$ is ribbon and the natural transformation $\theta$ is called a
\emph{twist} if for all $X \in \cat$: 
\begin{equation}
  \label{eq:twistc}
  \theta_{X^*}= (\theta_X)^* := 
   (\ev_X \otimes  \Id_{X^*})(\Id_{X^*}  \otimes \theta_X \otimes \Id_{X^*})(\Id_{X^*}\otimes \coev_X)
\end{equation}
It is enough to check \eqref{eq:twistc} on generic simple objects, i.e. the typical modules $\qum{V}(a_1,a_2)$ with $a_2 \notin \Z$.
Given an object $X$ in $\cat$, by definition $X^*$ is the $\Usl$-module whose action is given by $y.f(x)=(-1)^{\p y \p f}f(S(y)x)$ for $y\in \Usl$, $x\in X$ and $f\in X^*$.
It follows that the lowest weight vector $v$ of $\qum{V}(a_1,a_2)$ is dual to the highest weight vector $v^*$ in $\qum{V}(a_1,a_2)^*$ and has weight $(a_1, -a_1-a_2-1)$.
Thus $\qum{V}(a_1,a_2)^* \simeq \qum{V}(a_1,-a_1-a_2-1)$.
The morphisms  $ \theta_{\qum{V}(a_1,a_2)}$ and $ \theta_{\qum{V}(a_1,a_2)^*}$ are determined by their values on the highest weight vectors.
A direct computation shows that
\begin{equation}
  \theta_{\qum{V}(a_1,a_2)}=q^{-2a_2(a_1+a_2+1)}\Id_{\qum{V}(a_1,a_2)}
\end{equation}
and
\begin{equation}
  \theta_{\qum{V}(a_1,a_2)^*}=q^{-2(-a_1-a_2-1)(1+ a_1-a_1-a_2-1)}\Id_{\qum{V}(a_1,a_2)^*}.
\end{equation}
The identity $(\ev_X \otimes \Id_{X^*})(\Id_{X^*} \otimes \coev_X)=\Id_{X^*}$ implies that the right side of \eqref{eq:twistc} with $X=\qum{V}(a_1,a_2)$ is equal to $q^{-2a_2(a_1+a_2+1)}\Id_{\qum{V}(a_1,a_2)} $.
It follows that \eqref{eq:twistc} holds for all simple modules in $\cat_g$ with $g\in \C/\Z\setminus \Z/\Z$, thus proving the proposition.
\end{proof}	

\subsection[Quantum Invariants from sl(2|1)]{Quantum Invariants from $\slto$}\label{sec:QuantumInvariants}

\newcommand{\pic}[2]{
	\setlength{\unitlength}{#1}
	{\begin{array}{c} \hspace{-1.3mm}
			\raisebox{-4pt}{#2}
			\hspace{-1.9mm}\end{array}}}
\newcommand{\drawTr}{
	\qbezier(3, 3)(3, 0)(6, 0)
	\qbezier(6, 0)(9, 0)(9, 4)
	\put(9,4){\vector(0,1)3}
	\multiput(0,3)(6,0){2}{\line(0,1){5}}
	\multiput(0,3)(0,5){2}{\line(1,0){6}}
	\qbezier(3, 8)(3, 11)(6, 11)
	\qbezier(6, 11)(9, 11)(9, 7)
}

\newcommand{\drawQDim}{
	\qbezier(0, 3)(0, 0)(3, 0)
	\qbezier(3, 0)(6, 0)(6, 3)
	\put(6,3.5){\vector(0,1)1}
	\qbezier(0, 3)(0, 7)(3, 7)
	\qbezier(3, 7)(6, 7)(6, 4)
}

\newcommand{\Rib}{\mathcal{R}}
\newcommand{\mathsmall}[1]{\mbox{\small$#1$}}
\newcommand{\qdim}{\operatorname{qdim}} 
\newcommand{\md}{\operatorname{\mathsf{d}}}

We established in Section \ref{sec:RibbonStructure} that $\cat$ is a ribbon category.
It is therefore possible to use the frameworks laid out in \cite{Reshetikhin_Turaev_1990,Geer_Patureau-Mirand_Turaev_2009} to define knot invariants with $\cat$ as the algebraic input.
We will not give a full treatment of this theory, and instead defer to the aforementioned articles.
A detailed treatment of the semisimple case can be found in \cite{Turaev_2010}.
	
We consider framed oriented tangles whose components are colored by objects of $\cat$.
Such tangles are called \emph{$\cat$-colored ribbons}.
Let $\Rib_\cat$ denote the category of $\cat$-colored ribbons (for details see \cite{Turaev_2010}).
The well-known Reshetikhin-Turaev construction defines a $\C[[h]]$-linear functor 
  \begin{equation*}
    F:\Rib_\cat\to \cat.
  \end{equation*}
The value of any $\cat$-colored ribbon under $F$ can be computed using the six \emph{building blocks}, which are the morphisms
 $\overarrowsplus,\overarrowsminus,\downcurvearrowleft,\downcurvearrowright,\curvearrowleft,\curvearrowright$ with arbitrary colors in  $\Rib_\cat$.   The functor $F$ transforms these building blocks as follows:
	\begin{equation}\label{eq:building-blocks}
	\begin{array}{l@{\quad}l@{\quad}l}
	F(\overarrowsplus) = c,& F(\downcurvearrowright) = \coev, & F(\downcurvearrowleft) = \tcoev\,, \\[.1cm]
  F(\overarrowsminus) = c^{-1},& F(\curvearrowright) = \ev,& F(\curvearrowleft) = \tev.
	\end{array}
	\end{equation}
  We've suppressed the colors, which determine the components of the braiding and duality transformations.
	Vertical lines are sent to the identity morphism and reversing the direction of an arrow is equivalent to coloring instead by the dual module. 
	
	Let  $L$ be a link with some component labeled by a simple object $V\in \cat$.
  By cutting this component we obtain a tangle $T_V$ whose two ends are labeled with $V$.
  By definition $F(T_V)\in \End_\cat(V)$.
  Since $V$ is simple, this endomorphism is the product of the identity $\Id_V:V\to V$ with an element $\langle T_V \rangle$ of the ground ring of $\cat$, i.e. $F(T_V)= \langle T_V \rangle \Id_V$.
  In particular,
	\begin{equation}\label{eq:DefF}
	\begin{aligned}
	F(L)&=F\left(\pic{0.6ex}{
		\begin{picture}(10,11)(1,0)
		\drawTr
		\put(2,4.5){$\mathsmall{T}$}
		\put(10,4){$\mathsmall{V}$}
		\end{picture}}\;\right)
	=\langle T_V \rangle\,F\left(\pic{0.6ex}{
		\begin{picture}(10,11)(1,0)
		\drawTr
		\put(0.3,4.5){$\mathsmall{\Id_V}$}
		\put(10,4){$\mathsmall{V}$}
		\end{picture}}\;\right)\\
	&=\langle T_V \rangle\, F\left( \pic{0.6ex}{
		\begin{picture}(8,7)(1,0)
		\drawQDim
		\put(7,2){$\mathsmall{V}$}
		\end{picture}}\right) 
	=\langle T_V \rangle (\tev_V\circ \coev_V).
	\end{aligned}
	\end{equation}
  The quantity $\qdim_\cat(V) := \tev_V \circ \coev_V$ is called the \emph{quantum dimension} of $V$.
  The quantum dimension vanishes when $V=\qum{V}(a_1,a_2)$ is a typical representation:
\begin{equation*}
  \qdim_\cat(\qum{V}(a_1,a_2)) :=(\tev_{\qum{V}(a_1,a_2)}\circ \coev_{\qum{V}(a_1,a_2)}) =0\,.
\end{equation*}
	See \cite{Geer_Patureau-Mirand_2007} for details, including a proof of the vanishing quantum dimension. 
  Thus, from \eqref{eq:DefF} we have that $F(L)=0$ if any component of $L$ is colored by a typical module $\qum{V}(a_1,a_2)$.  
	
	In \cite{Geer_Patureau-Mirand_2007} it is shown that one can replace such a vanishing quantum dimension in \eqref{eq:DefF} with a certain non-zero scalar and obtain a well defined invariant of links.
  This process was extended to a general theory in \cite{Geer_Patureau-Mirand_Turaev_2009}.
  We will briefly recall this construction.     
	
	Consider the function $\md$ from the set of typical modules to $\C[[h]]$ given by
	\begin{equation}\label{eq:modified-dimension}
    \md(\qum{V}(a_1,a_2)_{\p p}) = \frac{\qn{a_1+1}}{\qn{1}\qn{a_2}\qn{a_2 +a_1 +1}}
	\end{equation}
	This function is called the \emph{modified dimension}, because it replaces the quantum dimension in the construction of link invariants.	
  It can be derived from $F$ itself applied to open Hopf links, see \cite[\S 2]{Geer_Patureau-Mirand_Turaev_2009}. 
  Let $L$ be a $\cat$-colored framed link with at least one component colored by a typical module $\qum{V}(a_1,a_2)$.
  Cutting this component as above, we obtain a tangle $T_{\qum{V}(a_1,a_2)}$.
By \cite[Prop. 27]{Geer_Patureau-Mirand_Turaev_2009}, the assignment 
\begin{equation}\label{eq:sl21-invariant-1}
  L\mapsto F'(L):=\md(\qum{V}(a_1,a_2)) \langle T_{\qum{V}(a_1,a_2)}  \rangle
\end{equation}
 yields a well-defined isotopy invariant of $L$, independent of the choice of cut component.
 Although \eqref{eq:modified-dimension} may seem ad-hoc, the invariant \eqref{eq:sl21-invariant-1} is unique up to a global scalar.
 See the discussion after \cite[Prop. 27]{Geer_Patureau-Mirand_Turaev_2009}.  
	
 In the remainder of this paper we assume that \emph{every} strand of an $n$-component link $L$ is colored by some typical module $\qum{V}(a_{1i},a_{2i})$, for $i = 1,\ldots, n$.
 It does not matter which strand is cut, and without loss of generality we choose it to be the one labeled by $\qum{V}(a_{11},a_{21})$.
  The corresponding invariant \eqref{eq:sl21-invariant-1} defines a function
\begin{equation}\label{eq:sl21-invariant-2}
  F'(L) : (\Z_{\geq 0} \times \C^*)^n \to \C[[h]]\,,\qquad F'(L)(a_{11},a_{21},...,a_{1n},a_{2n}) = \md(\qum{V}(a_{11},a_{21})) \langle T_{\qum{V}(a_{11},a_{21})} \rangle\,.
\end{equation}
Establishing the $q$-holonomicity of this family of functions is the main result of this paper.

\section{q-Holonomic systems}

We give a brief review of q-holonomic systems and recommend \cite{Sabbah_1993,Garoufalidis_Le_2016} for more details.
Note that we work in the h-adic setting, i.e. over $\C[[h]]$ instead of $\C(q)$, following \cite{Kashiwara_Schapira_2011}.

A prerequisite structure for the formation of a q-holonomic system in this framework is an action of a quantum torus.
The standard story is given by the rank $n$ q-Weyl algebra, which has the presentation
\begin{equation}\label{eq:qWeylAlgebra}
  \mathbb{W}_n := \langle L_1,\ldots,L_n, M_1,\ldots, M_n \; | \; L_i M_j = q^{\delta_{ij}} M_j L_i \rangle
\end{equation}
with the $M$'s and $L$'s commuting between themselves.
This has a standard action on the vector space of functions $\{\Z^n \to \C[[h]] \}$, given by
\begin{align}\label{eq:StandardAction}
  L_if(a_1,\ldots,a_n) = f(a_1,\ldots,a_i + 1 ,\ldots,a_n), && M_i f(a_1,\ldots,a_n) = q^{a_i} f(a_1,\ldots,a_n).
\end{align}
Where $q = e^{h/2}$.

\begin{definition}\cite[Prop. 1.5.2]{Sabbah_1993}
  Let $N$ be a finitely generated module for a q-Weyl algebra $\mathbb{W}_n$.
  The \emph{\bf homological dimension} of $N$ is the degree of its Hilbert polynomial $h_N$ with respect to any good filtration:
  \begin{equation}
    \dim N := \deg h_N.
  \end{equation}
\end{definition}
Note that $\mathbb{W}_n$ itself has homological dimension $2n$, while the trivial module $0$ has homological dimension $0$.
A fundamental result in the theory of D-modules and their q-difference analogs is \emph{Bernstein's Inequality}, which states that either $\rank \mathbb{W}_n \leq \dim M \leq 2\rank \mathbb{W}_n$ or $\dim M = 0$, in which case $M = 0$.
\begin{definition}\label{def:q-holonomic}
  A finitely generated $\mathbb{W}_n$-module $N$ is called \emph{\bf q-holonomic} when its homological dimension is as low as possible, i.e.
  \begin{equation}
    \dim N = n \text{ or } 0.
  \end{equation}
\end{definition}
We will frequently consider cyclic modules $\mathbb{W}_n f(m_1,\ldots,m_n)$.
In this setting lower homological dimension means more linear relations between the functions $P f$, for $P \in \mathbb{W}_n$.
These relations can be interpreted as exhibiting some quasi-periodicity of the function.
When $n=1$ a single non-trivial relation $Pf = 0$ implies q-holonomicity, but
when $n >1$ it is generally not enough to have $n$ distinct operators annihilate $f$.
The following lemma is useful for establishing q-holonomicity of elementary functions:
\begin{lemma}\cite[3.5]{Brown_Dimofte_Garoufalidis_Geer_2020}\label{lemma:q-hol-relations}
  Let $\mathbb{W}_n f$ be a cyclic module whose annihilation ideal contains elements of the form $p_j(M) L_j^{d_j} + q_j(M)$ for $j=1,\ldots,n$, where $p_j, q_j \neq 0$ are independent of the $L_j$ and $d_j \geq 1$.
  Then $\mathbb{W}_n f$ is q-holonomic.
\end{lemma}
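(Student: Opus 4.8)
The plan is to bound the growth of the Bernstein filtration on the cyclic module $N:=\mathbb{W}_n f$ and read off the homological dimension. By Bernstein's inequality a nonzero finitely generated $\mathbb{W}_n$-module has $\dim\geq n$, so it suffices to prove the \emph{upper} bound $\dim N\leq n$. Writing $F_k\mathbb{W}_n$ for the span of the ordered monomials $M^aL^b=M_1^{a_1}\cdots M_n^{a_n}L_1^{b_1}\cdots L_n^{b_n}$ of total degree $|a|+|b|\leq k$, and $F_kN:=(F_k\mathbb{W}_n)f$, this amounts to showing the Hilbert function $H_N(k):=\rank_{\C[[h]]}F_kN$ satisfies $H_N(k)=O(k^n)$. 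Using the $q$-commutation $L_iM_j=q^{\delta_{ij}}M_jL_i$ to move every $M$ to the left, $F_kN$ is spanned by $\{M^aL^bf:|a|+|b|\leq k\}$, so the whole problem reduces to counting how many of these are linearly independent.

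The engine of the reduction is that commuting a power of $L_j$ past a polynomial in the $M$'s merely shifts $M_j\mapsto q^cM_j$. Thus from $r_jf=0$ one gets, for every $b_j\geq d_j$, the relation $p_j^{(b_j-d_j)}(M)\,L_j^{b_j}f=-q_j^{(b_j-d_j)}(M)\,L_j^{b_j-d_j}f$, where $g^{(c)}$ denotes the shift of $g$ by $M_j\mapsto q^cM_j$; this lowers the $L_j$-power by $d_j$ at the cost of a nonzero $M$-polynomial factor of bounded degree. Since the coefficients are free of all $L$'s, the $n$ reductions act in independent directions and can be iterated. Localizing at the Ore set $S=R\setminus\{0\}$, where $R\subset\mathbb{W}_n$ is the (commutative) subalgebra generated by $M_1,\dots,M_n$, this shows that over the fraction field $\mathbb{F}=\operatorname{Frac}(R)$ the localization $N_S$ is spanned by the finite box $\{L^bf:0\leq b_j<d_j\}$, so $\dim_{\mathbb{F}}N_S\leq\prod_j d_j<\infty$. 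This cleanly controls the $L$-directions and forces $\dim\bigl(N/(S\text{-torsion})\bigr)\leq n$.

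Passing instead to leading symbols in the associated graded $\C[[h]][\xi_1,\dots,\xi_n,\eta_1,\dots,\eta_n]$ (with $\xi_i=\sigma(M_i)$, $\eta_i=\sigma(L_i)$), each $r_j$ contributes a single monomial symbol $\operatorname{lm}(p_j)(\xi)\,\eta_j^{d_j}=\xi^{e_j}\eta_j^{d_j}$. A standard-monomial count by inclusion–exclusion then asks how many monomials of degree $\leq k$ avoid divisibility by each $\xi^{e_j}\eta_j^{d_j}$. When the $\operatorname{lm}(p_j)$ involve \emph{distinct} $M$-variables the leading $k^{2n}$ contributions cancel in the inclusion–exclusion and the count is genuinely $O(k^n)$, yielding $\dim N\leq n$ directly.

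The main obstacle is the case where the $\operatorname{lm}(p_j)$ share variables: then the naive symbols $\xi^{e_j}\eta_j^{d_j}$ do not suffice, and one checks (e.g.\ for $n=2$ with $p_1=p_2=M_1$) that the surviving standard monomials can number $\Theta(k^{2n-1})$. The resolution, and the crux of the proof, is that the annihilator contains further elements obtained by \emph{eliminating} between the recursions — for instance the reduced forms of $L_ir_j-L_jr_i$ — which are nonzero \emph{precisely because each trailing coefficient $q_j\neq 0$}, and which contribute the extra leading symbols needed to collapse the excess directions. Equivalently, after the localization above one must still show the $S$-torsion submodule has dimension $\leq n$; this is exactly where the hypothesis $q_j\neq 0$ in \emph{every} direction is indispensable, as it prevents the degeneration exhibited by $\mathbb{W}_n/\mathbb{W}_n M_1$, which is generically finite yet has dimension $2n-1$. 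I would close this gap either by a Buchberger-style completion of the relation ideal — using $q_j\neq 0$ to guarantee the required $S$-polynomials are nonzero, then verifying the completed leading-term ideal has standard-monomial count $O(k^n)$ — or by induction on $n$, peeling off one pair $(M_j,L_j)$ via its recursion and invoking the inductive hypothesis on the resulting coefficient module.
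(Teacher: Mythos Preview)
The paper does not supply a proof of this lemma: it is quoted from \cite[Lemma~3.5]{Brown_Dimofte_Garoufalidis_Geer_2020} and used as a black box, so there is no in-paper argument to compare your attempt against.

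On its own merits, your proposal is an outline with a self-acknowledged gap rather than a proof. The Ore-localization step is correct and shows $\dim(N/T)\le n$ for the $S$-torsion submodule $T$, and your associated-graded diagnosis is also right: when the leading $M$-monomials of the $p_j$ share variables, the symbols $\sigma(p_j)(\xi)\,\eta_j^{d_j}$ alone cut out a variety of dimension up to $2n-1$, so the naive standard-monomial bound genuinely fails. The remaining task --- controlling $T$, or equivalently producing the missing leading symbols --- is where the argument stops. Neither of your proposed fixes is carried out, and neither is routine. For the Buchberger route, it is not enough that the cross-relations $L_i r_j - L_j r_i$ reduce to something nonzero thanks to $q_j\neq 0$: you must show that iterated completion terminates in a symbol ideal of height $\ge n$, and when several $p_j$ share a common factor this can take multiple rounds whose structure and termination you have not described. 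For the inductive route, after absorbing $r_n$ the residual object is naturally a module over $\mathbb{W}_{n-1}\langle M_n\rangle$ rather than $\mathbb{W}_{n-1}$, and the remaining relations $r_1,\dots,r_{n-1}$ still involve $M_n$, so the inductive hypothesis does not apply without an additional reduction you have not supplied. You have correctly located the crux of the lemma, but the proposal remains a sketch until one of these routes is actually executed.
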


\subsection{Non-discrete variables}\label{sec:non-discrete-variables}
We use the same framework as \cite{Brown_Dimofte_Garoufalidis_Geer_2020} to work with complex variables. 
Suppose we have $n$ complex variables $\alpha_1, \ldots, \alpha_n$ and $m$ discrete ($\N$ or $\Z$-valued) ones $a_1,\ldots, a_m$.
Let $\mathbb{V}_n$ be the field of rational functions over $\C[[h]]$, in the variables
$\{x_i \mid i = 1,\ldots, n\}$ and $\{z_{ij} \mid i,j = 1,\ldots, n\}$.
In this way we work formally with the linear and quadratic q-exponentials $q^{\alpha_i}$ and $q^{\alpha_i \alpha_j}$.
Let 
\begin{equation}
  \mathcal{V}_{m,n} := \{ f : \mathbb{Z}^m \to \mathbb{V}_n\}.
\end{equation}
This has an action of $\mathbb{W}_{n+m}$.
For the discrete variables it's given by \eqref{eq:StandardAction}.
For the complex variables, we reproduce this action formally on the co-domain $\mathbb{V}_n$.
Each $M_i$ is multiplication by $x_i$, while the shifts become
\begin{align}
  L_i x_j = q^{\delta_{ij}} x_j &&& L_i z_{jk} = q^{\delta_{ij} \delta_{ik}} x_j^{\delta_{ik}} x_k^{\delta_{ij}} z_{jk}.
\end{align}
In practice it's useful to think of $\mathcal{V}_{m,n}$ as functions $\Z^m \times \C^n \to \C[[h]]$.

\subsection{Examples of q-holonomic functions}
We present some of the functions which show up in the expression of the $\slto$ quantum invariant.
These are all previously known to be q-holonomic.
\begin{description}
  \item[Linear and quadratic powers]
    Functions of the form $q^{n_i^2}, q^{n_i n_j}$, and $q^{n_i}$ are q-holonomic. 
    In \cite{Brown_Dimofte_Garoufalidis_Geer_2020} it's shown that the formal analogs $z_{ii}, z_{ij}$, and $x_i$ are likewise q-holonomic.
  \item[q-Pochhammer]
  The function $n_1,n_2 \mapsto (q^{n_1}; q)_{n_2}$ is q-holonomic by Lemma \ref{lemma:q-hol-relations}, since its annihilation ideal contains $(1-M_{1})L_{1} - 1 + M_{1} M_{2}$ and $L_{2} + M_{1}M_{2} -1$.  
\item[Inverse q-Pochhammer]
  The function $n_1, n_2 \mapsto 1/(q^{n_1};q)_{n_2}$ is q-holonomic by Lemma \ref{lemma:q-hol-relations}.
  Its annihilation ideal contains $(1-M_1M_2)L_1 - M_1 + 1$ and $(1-M_{1} M_{2})L_2 - 1$.
\item[Inverse q-numbers]
  The expression $n \mapsto 1/\{n\}$ is q-holonomic.
  We can see this by rewriting it as a product of q-holonomic functions:
  \begin{equation}
    \frac{1}{\{n\}} := \frac{1}{q^n - q^{-n}} = \frac{q^{-n}}{(q^{-2n};q)_1}
  \end{equation}
  then invoking the closure properties discussed in the next section.
\item[Indicator functions]
  The function 
  \begin{equation}
    \vartheta_{[n_1,n_2]}(n_3) := \begin{cases}
      1 & n_1 \leq n_3 \leq n_2 \\
      0 & \text{otherwise} \\
    \end{cases}
  \end{equation}
  is annihilated by $(M_3-M_1)(L_1-1)$, $(M_3 - q M_2)(L_2-1)$, and $(M_3 - M_1)(M_3 - q^{-1}M_1)(L_3-1)$.
  It follows from Lemma \ref{lemma:q-hol-relations} that it is q-holonomic.
\end{description}

\subsection{Closure properties}\label{sec:closure-properties}
The class of q-holonomic functions enjoy many closure properties, as detailed in \cite[5.2,5.3]{Garoufalidis_Le_2016} and extended to non-discrete variables in \cite[3.2]{Brown_Dimofte_Garoufalidis_Geer_2020}. 
The crucial ones for us here are addition $f+g$, multiplication $fg$, specialization $f|_{k_i = \lambda}$, extension $f(k_1,\ldots,k_{n},k_{n+1}) := f(k_1,\ldots,k_n)$, affine substitution $h(\bar{k}):= f(A \bar{k} + \bar{b})$, and multisums $h(k_1,\ldots,k_{n-1},a,b) := \sum_{k_n=a}^{b} f(k_1,\ldots,k_n)$ .
We will often use these implicitly.

\begin{lemma}\label{lemma:holonomic-matrix-operations}
  Let $X$,$Y$ be parameterized matrices whose coefficients are q-holonomic in both their coordinates (i.e. row and column position) and the parameters. Then the same holds for $X+Y$, $X \oplus Y$, $X \otimes Y$, and $XY$.
\end{lemma}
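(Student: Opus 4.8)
The plan is to reduce each of the four operations to the scalar closure properties recorded in Section~\ref{sec:closure-properties}, after fixing a convention for what it means for a parameterized matrix to be q-holonomic. We regard such a matrix $X$ as a function $X(i,j,\bar p)$ of a row index $i\in\Z$, a column index $j\in\Z$, and the parameters $\bar p$, required to be q-holonomic in all of these jointly; crucially we allow the matrix dimensions themselves to occur among the parameters, since this is needed for the intended application (e.g. the typical module $\qum{V}(a_1,a_2)$ has dimension $4a_1+4$). Multiplying by the indicator functions $\vartheta_{[1,m]}(i)\,\vartheta_{[1,n]}(j)$, which are q-holonomic jointly in $m,n,i,j$, we may and do assume the entries of $X$ vanish outside its nominal $m\times n$ range; this makes sums over index variables effectively finite and lets us align later steps with the multisum closure property.

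With this setup in place, the four claims are short. For $X+Y$ one invokes closure under addition directly. For the direct sum, if $X$ is $m_1\times n_1$ and $Y$ is $m_2\times n_2$, write
\begin{equation*}
  (X\oplus Y)(i,j,\bar p) \;=\; X(i,j,\bar p) \;+\; Y(i-m_1,\,j-n_1,\,\bar p);
\end{equation*}
the support conventions force the two terms into disjoint blocks and the off-diagonal blocks to vanish, and the second term is an affine reindexing (affine in the full tuple, as $m_1,n_1$ are among the parameters) of a q-holonomic function, so closure under affine substitution and addition finishes it. For the Kronecker product we keep the natural labelling of rows and columns of $X\otimes Y$ by pairs, so that
\begin{equation*}
  (X\otimes Y)(i_1,i_2,j_1,j_2,\bar p) \;=\; X(i_1,j_1,\bar p)\,Y(i_2,j_2,\bar p),
\end{equation*}
a product of extensions of q-holonomic functions, hence q-holonomic. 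Finally, for the matrix product of an $m\times n$ matrix $X$ with an $n\times\ell$ matrix $Y$,
\begin{equation*}
  (XY)(i,k,\bar p) \;=\; \sum_{j=1}^{n} X(i,j,\bar p)\,Y(j,k,\bar p),
\end{equation*}
whose summand is q-holonomic jointly in $(i,j,k,\bar p)$ by multiplication and extension, and whose upper bound $n$ is an admissible parameter; closure under multisums then yields the claim.

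The steps are each routine; the one point I would single out as the main obstacle is the bookkeeping around index ranges and dimensions-as-parameters. Specifically one must check that (a) truncating supports with the indicator functions preserves \emph{joint} q-holonomicity, which works precisely because those indicator functions are q-holonomic in their endpoints as well as their argument; (b) the reindexings used for $\oplus$ and $XY$ are affine in the entire variable tuple, parameters included; and (c) one should \emph{not} collapse the pair $(i_1,i_2)$ labelling $X\otimes Y$ into a single index $i=(i_1-1)m_2+i_2$, since this introduces the non-affine term $i_1 m_2$ when $m_2$ is a parameter (it is harmless only when the dimensions are fixed integers). Granting this bookkeeping, the lemma follows directly from the closure properties already available.
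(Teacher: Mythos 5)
Your proof is correct and takes essentially the same route as the paper's: each of the four operations is expressed entrywise through sums, products, affine reindexings, indicator functions, and multisums of the original coefficients, after which the scalar closure properties of Section~\ref{sec:closure-properties} finish the argument. Your added bookkeeping --- treating the dimensions as parameters, writing the direct sum via an affine shift rather than unshifted indices, and warning against flattening the Kronecker index pair --- is a careful refinement of details the paper leaves implicit.
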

\begin{proof}
Let $X$ be an $n_1 \times n_2$ matrix and $Y$ an $m_1\times m_2$ matrix.
For matrix sum ($n_1 = m_1, n_2 = m_2$) the coefficients are added.
For direct sum, the new coefficients are 
\begin{equation}
  (X\oplus Y)_i^j = X_i^j\vartheta_{[1,n_1]}(i)\vartheta_{[1,n_2]}(j) + Y_i^j\vartheta_{[n_1+1,n_1+m_1]}(i)\vartheta_{[n_2+1,n_2+m_2]}(j).
\end{equation}
For tensor product the relevant formula is
\begin{equation}
  (X\otimes Y)^{j_1,j_2}_{i_1,i_2} = X_{i_1}^{j_1} Y_{i_2}^{j_2},
\end{equation}
while for matrix product ($n_2 = m_1$) the coefficients are given by
\begin{equation}
  (XY)_i^j = \sum_{k = 1}^{n_2} X_i^k Y_k^j.
\end{equation}
In each case the new coefficients can be written in terms of sums, products, and multisums of the $X_i^j$ and $Y_i^j$, together with indicator functions.
Since these operations preserve q-holonomicity, we conclude that for the four operations considered here, the new coefficients are q-holonomic whenever the original ones are.
\end{proof}

\subsection{q-Holonomic systems and ribbon categories}
Reshetikhin-Turaev style knot invariants, such as those considered in this paper, depend heavily on the choice of a ribbon category. 
It's therefore reasonable to think of their q-holonomicity as a property of this underlying category.
We take a small step in that direction with the following definition and lemma.
\begin{definition}\label{def:q-holonomic-ribbon}
  Fix an $\A$-linear\footnote{If $\A$ is only an integral domain (e.g. $\C[[h]]$) and not a field then we require that hom-spaces are finite rank free $\A$-modules, so that we can still (non-canonically) identify morphisms as matrices. In this case $\dim_\A$ means rank as a free $\A$-module. The main category considered in this paper, $\cat$, meets this requirement.} %
  ribbon category $(\mathcal{C}, c, \tev, \ev, \tcoev,\coev,\theta)$ and a collection $S_\Lambda := \{V_\lambda\}_{\lambda \in \Lambda}$ of its objects.
  Here $\A$ is an integral domain. 
  We require that $S_\Lambda$ is closed under taking duals, i.e. $V_\lambda^* \in S_\Lambda$ for all $\lambda \in \Lambda$.
  Suppose that $\Lambda \hookrightarrow \Z^m\times\C^n$ has codimension zero\footnote{i.e. $\Lambda$ is the complement of a hyperplane arrangement.}, so that $\A$-valued functions on $\Lambda \times \N^k$ can be identified almost-everywhere with elements in $\mathcal{V}_{m+k,n}$.
  We will call the ribbon structure \emph{\bf q-holonomic over $S_\Lambda$} if the following functions exist and are q-holonomic in the sense of \cite{Sabbah_1993,Brown_Dimofte_Garoufalidis_Geer_2020}:

  \begin{description}
    \item[dimension:] $\dim_{\A} : \Lambda \to \mathbb{N}$, given by $\lambda \mapsto \dim_{\A}(V_\lambda)$.
    \item[modified dimension:] $\md : \Lambda \to \A$ given by $\lambda \mapsto \md(V_\lambda)$. Here we assume that the tensor ideal used to define the modified dimension contains $S_\Lambda$. If $\mathcal{C}$ is semisimple, this ideal can be taken to be the full category, in which case modified dimension agrees with the usual quantum dimension of objects.
    \item[braiding:] $\Lambda^2 \times \mathbb{N}^2 \to \A$, given by $(\lambda_1,\lambda_2, i,j) \mapsto \left(c_{V_{\lambda_1},V_{\lambda_2}}\right)_i^j$,
    where the morphism $c_{V_{\lambda_1},V_{\lambda_2}} : V_{\lambda_1}\otimes V_{\lambda_2} \to V_{\lambda_2} \otimes V_{\lambda_1}$ is written in terms of matrix coefficients, with $i = 1,\ldots \dim_{\A}(V_{\lambda_1}\otimes V_{\lambda_2})$ and $j = 1 ,\ldots, \dim_{\A}(V_{\lambda_2}\otimes V_{\lambda_1})$.
      By convention, we extend the row and column indices to $\mathbb{N}$ by zero.
    \item[inverse braiding:]
      As above, based instead on the inverse braiding $\Lambda^2 \times \N^2 \to \A$ given by $(\lambda_1,\lambda_2, i,j) \mapsto \left(c_{V_{\lambda_1},V_{\lambda_2}}^{-1}\right)_i^j$.
    \item[rigidity:] Here we consider four functions $\Lambda \times \mathbb{N} \to \A$:
      \begin{align*}
        (\lambda,i) & \mapsto \left(\coev_{V_\lambda}\right)^{i}, & (\lambda,i) & \mapsto \left(\ev_{V_\lambda}\right)_{i}, \\
        (\lambda, i) & \mapsto \left(\tcoev_{V_\lambda}\right)^{i}, & (\lambda,i) & \mapsto \left(\tev_{V_\lambda}\right)_{i}.
      \end{align*}
      In all cases $i=1,\ldots, \dim_{\A}(V\otimes V^*)$ is extended to $\N$ by zero.
  \end{description}
\end{definition}

In Definition \ref{def:q-holonomic-ribbon}, we're using the restriction to $\Gamma$ of the standard action of $\W_{m+n}$ on functions from $\Z^m\times\C^n$ to $\mathcal{V}_{m+k,n}$.

\begin{lemma}\label{lemma:q-hol-from-category}
  Let $\mathcal{C}$ be a ribbon category.
  If $\mathcal{C}$'s ribbon structure is q-holonomic over a collection $S_\Lambda := \{V_\lambda\}_{\lambda \in \Lambda}$ of simple objects then the corresponding quantum invariants will be q-holonomic in the sense of \cite{Sabbah_1993,Brown_Dimofte_Garoufalidis_Geer_2020}.
\end{lemma}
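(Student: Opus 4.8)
The plan is to trace through the modified Reshetikhin--Turaev construction recalled in Section \ref{sec:QuantumInvariants} and to check that, link by link, the invariant is assembled from the data of Definition \ref{def:q-holonomic-ribbon} by operations that preserve q-holonomicity.

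First I would set up the reduction. Fix a link $L$ whose $n$ components are coloured by objects $V_{\lambda_1},\dots,V_{\lambda_n}$ of $S_\Lambda$; each $\lambda_i$ ranges over $\Lambda$, so the colourings range over $\Lambda^{n}$, which again embeds with codimension zero into a product $\Z^{M}\times\C^{N}$, and the task is to show that $(\lambda_1,\dots,\lambda_n)\mapsto F'(L)$ is q-holonomic. Cut the first component to obtain a $(1,1)$-tangle $T$ whose endpoints are coloured by $V_{\lambda_1}$. Since $V_{\lambda_1}\in S_\Lambda$ is simple, $F(T)=\langle T\rangle\,\Id_{V_{\lambda_1}}$ for a scalar $\langle T\rangle\in\A$, and $F'(L)=\md(V_{\lambda_1})\,\langle T\rangle$ as in \eqref{eq:sl21-invariant-1}. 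The function $\lambda_1\mapsto\md(V_{\lambda_1})$ is q-holonomic by hypothesis, hence so is its extension to $\Lambda^{n}$ by the dummy variables $\lambda_2,\dots,\lambda_n$; and since every diagonal entry of $F(T)$ equals $\langle T\rangle$, the function $(\lambda_1,\dots,\lambda_n)\mapsto\langle T\rangle$ is the specialisation at $i=j=1$ of the matrix-coefficient function $(\lambda_1,\dots,\lambda_n,i,j)\mapsto\bigl(F(T)\bigr)^{j}_{i}$, where, as in Definition \ref{def:q-holonomic-ribbon}, $i$ and $j$ range over $\N$ and the matrix is extended by zero. As products, extensions, and specialisations of q-holonomic functions are q-holonomic, it is enough to show that this matrix-coefficient function is q-holonomic.

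Next I would read $F(T)$ off the diagram. Since $T$ is a fixed tangle diagram, present it as a vertical composite of finitely many elementary slices, each the tensor product of identity morphisms with exactly one of the six building blocks of \eqref{eq:building-blocks}; reversing the orientation of a strand merely replaces its colour by the dual, which again lies in $S_\Lambda$. Applying $F$ and working in the bases induced from fixed bases of the $V_{\lambda_i}$, the matrix of $F(T)$ is a composite of finitely many matrices, each a Kronecker product of identity matrices and a single building-block matrix. Each building-block matrix is q-holonomic in $(\lambda_1,\dots,\lambda_n)$ and in its row and column indices: the two crossings are controlled by the braiding and inverse-braiding hypotheses and the four cups and caps by the rigidity hypothesis, the substitution needed to express a dualised colour $\lambda^{\ast}$ in terms of $\lambda$ being affine. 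The identity matrices are q-holonomic as well, their coefficients having the form $\delta_{ij}\,\vartheta_{[1,N]}(i)$ with $N$ a product of dimensions $\dim_\A(V_\lambda)$, and $\dim_\A$ is q-holonomic by hypothesis. Finally, Lemma \ref{lemma:holonomic-matrix-operations} guarantees that tensor products and matrix products of parameterised matrices with q-holonomic coefficients again have q-holonomic coefficients, and $F(T)$ is obtained from the building blocks and identities by a fixed finite sequence of such operations. Hence $\bigl(F(T)\bigr)^{j}_{i}$ is q-holonomic, which by the previous paragraph completes the argument.

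The essential content is packaged into Lemma \ref{lemma:holonomic-matrix-operations} together with the closure properties of Section \ref{sec:closure-properties}, and the one genuinely delicate point is the following bookkeeping issue: the ranks $\dim_\A(V_\lambda)$, and those of the intermediate tensor products, vary with $\lambda$, so the matrices cannot be indexed over a fixed finite set. One therefore works throughout with matrices indexed over $\N$ and extended by zero, and the sums appearing in the matrix products are over $\N$ but have finite support for each value of the parameters, so that the summation closure property still applies. This is precisely the device used in \cite{Brown_Dimofte_Garoufalidis_Geer_2020} to handle Reshetikhin--Turaev invariants outside the semisimple setting; since no new ideas are required, I would only sketch this step and refer to \cite{Brown_Dimofte_Garoufalidis_Geer_2020} for the details.
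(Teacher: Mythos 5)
Your proposal is correct and follows essentially the same route as the paper's proof: cut the link to a $(1,1)$-tangle, decompose it into the six building blocks and identities, invoke the hypotheses of Definition \ref{def:q-holonomic-ribbon} for each block, and close up with Lemma \ref{lemma:holonomic-matrix-operations} and the multiplication by the modified dimension. Your additional care about extracting the scalar $\langle T\rangle$ as a matrix-coefficient specialisation, the q-holonomicity of the identity matrices, and the $\lambda$-dependent ranks handled by extension-by-zero are all points the paper leaves implicit, but they do not constitute a different argument.
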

Lemma \ref{lemma:q-hol-from-category} implicitly relies on the fact that the actions in Definition \ref{def:q-holonomic-ribbon} agree with those on the resulting quantum invariant.
This follows from the fact that the quantum invariant is obtained from the various functions in Definition \ref{def:q-holonomic-ribbon} by a series of summations, products, and convolutions. At each step the new module's action is induced from the old, ensuring agreement.
\begin{proof}
  Let $\mathcal{D}_\lambda$ be a diagram of a link $L$, whose components are colored by simple objects $V_{\lambda_1},\ldots,V_{\lambda_\ell} \in S_\Lambda$, and let $\mathcal{T}_\lambda$ be the same diagram, cut along some arc and turned into a (1,1)-tangle.
  Here we use the shorthand $\lambda = (\lambda_1, \ldots, \lambda_\ell)$. 
  Without loss of generality, we suppose the open ends of the tangle are colored by $V_{\lambda_1}$. 
  As a morphism in $\Rib_\mathcal{C}$, the tangle $\mathcal{T}_\lambda$ can be decomposed into the composition of tensor products of the six basic building blocks: $\overarrowsplus,\overarrowsminus,\curvearrowleft, \curvearrowright, \downcurvearrowleft, \downcurvearrowright$ and identity morphisms on the various $V_{\lambda_i}$. 

  By assumption the evaluation functor $\Rib_\mathcal{C} \to \mathcal{C}$ used to define the quantum invariant sends each of these building blocks to a q-holonomic function.
  Since each $V_{\lambda_i}$ is finite dimensional and the hom-spaces are free $\A$-modules, each of these can be understood as a matrix.
  Since Lemma \ref{lemma:holonomic-matrix-operations} shows that q-holonomicity is preserved by tensor product and composition (which becomes matrix multiplication), it follows that the function $\lambda_1,\ldots,\lambda_{\ell} \mapsto \langle T_\lambda \rangle \in \End_\mathcal{C}(V_{\lambda_1}) \simeq \A$ is q-holonomic.
  The quantum knot invariant is the product of this scalar and the modified dimension, and therefore q-holonomic.
\end{proof}

We wrap up this section with a conjecture regarding the expected generalization of our main result.
\begin{conjecture}\label{conj:lie-superalgs-holonomic}
  Let $\mathfrak{g}$ be a contragredient Lie superalgebra, meaning it's simple, finite dimensional, basic\footnote{Basic here means it's not one of the strange Lie superalgebras $\mathfrak{p}(n)$ or $\mathfrak{q}(n)$, for which the classification of typical representations \cite[Theorem 1]{Kac_1978} doesn't apply.}, and its even sub-algebra is reductive.
  Let $U_h\mathfrak{g}$ be its quantized universal enveloping algebra, and $\Rep_h^{tf}\mathfrak{g}$ the category of topologically free finite rank $U_h\mathfrak{g}$ modules. (See Definition \ref{def:topologically-free}.) 
  Let $S_\Lambda$ denote the set of representations whose underlying $\mathfrak{g}$ representation is typical \cite[\S 4]{Kac_1978}.

  We conjecture that for any knot $K$ the quantum knot invariant $L_K : \Lambda \to \C[[h]]$ built from $S_\Lambda$ using the ribbon structure \cite{Khoroshkin_Tolstoy_1991} and the modified dimension associated to any tensor ideal containing $S_\Lambda$ are q-holonomic.
\end{conjecture}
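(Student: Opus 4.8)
The plan for Conjecture~\ref{conj:lie-superalgs-holonomic} is to reduce it to the hypotheses of Lemma~\ref{lemma:q-hol-from-category}: it suffices to check that the ribbon structure on $\Rep_h^{tf}\mathfrak{g}$ is q-holonomic over $S_\Lambda$ in the sense of Definition~\ref{def:q-holonomic-ribbon}, the typical modules being simple in the sense of Definition~\ref{def:simple} just as for $\slto$. Because $\mathfrak{g}$ is basic, Kac's typicality criterion \cite[\S 4]{Kac_1978} presents $\Lambda$ as the complement of a hyperplane arrangement, so it has codimension zero in $\Z^m\times\C^n$ as required, and the typical modules are projective--injective in the finite-dimensional category, which is exactly what makes the generic-semisimplicity argument of Proposition~\ref{P:catBraided} adapt without essential change. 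What then remains is to exhibit q-holonomic formulas for the $\A$-rank $\dim_\A V_\lambda$, the modified dimension $\md(V_\lambda)$, the matrix coefficients of the braiding $c_{V_{\lambda_1},V_{\lambda_2}}$ and of its inverse, and the matrix coefficients of the four (co)evaluation morphisms patterned on \eqref{eq:pivotal-structure}.

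First I would fix, for each $\mathfrak{g}$ on Kac's list, a PBW-type basis of the typical module $V_\lambda$ indexed by a lattice polytope whose facets are cut out by affine-linear inequalities in $\lambda$ and in the enumerating multi-indices; for $\slto$ this is the basis \eqref{eq:typical-module-basis}, organised by the $\Uhsl{2}$-decomposition \eqref{eq:sl2-decomposition}. The indicator function of such a polytope is q-holonomic exactly as for $\vartheta_{[n_1,n_2]}$, and $\dim_\A V_\lambda$ is then a multisum of it. Next I would write the action of each Chevalley generator on this basis as a matrix whose entries are built from $q$-numbers $\qn{\,\cdot\,}$ at affine-linear arguments, their inverses, and ratios thereof --- precisely the shape displayed for $\slto$ in the $E_i,F_i,h_i$ actions of Section~\ref{sec:TypicalModules}, and precisely the library of elementary q-holonomic functions assembled just before Section~\ref{sec:closure-properties}. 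This step is the analogue of Lemma~\ref{lemma:q-holonomic-action}.

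The $R$-matrix is then handled by the Khoroshkin--Tolstoy factorization \cite{Khoroshkin_Tolstoy_1991}: $\check R$ is an ordered product of $q$-exponentials $\exp_q(\pm\qn1\, E_\beta\otimes F_\beta)$ over the positive roots, times the Cartan factor $K$ of \eqref{eq:K}. On a finite-rank module each $E_\beta\otimes F_\beta$ acts nilpotently, so every $\exp_q$ truncates to a finite sum whose length is affine-linear in $\lambda$, and its coefficients $1/(n)_q!$ reduce --- up to quadratic $q$-powers --- to inverse $q$-Pochhammer symbols and are therefore q-holonomic. Combining the already q-holonomic generator matrices under sum, product, tensor product and matrix multiplication via Lemma~\ref{lemma:holonomic-matrix-operations} produces the braiding, and the identity $\exp_q(x)\exp_{q^{-1}}(-x)=1$ used in \eqref{eq:R-inverse} produces its inverse. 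The Cartan factor $K$ contributes the quadratic $q$-exponentials $q^{\lambda_i\lambda_j}$, q-holonomic by the ``linear and quadratic powers'' example, and the (co)evaluation maps involve only the dual-basis pairing together with the pivotal element, which is grouplike and linear in the $h_i$ --- hence a product of linear $q$-exponentials in $\lambda$ --- so they too are q-holonomic. Finally, for a basic Lie superalgebra the modified dimension of a typical module is, by a Weyl-type formula, a ratio of products of $q$-numbers $\qn{\langle\lambda+\rho,\alpha\rangle}$ over positive roots $\alpha$ at affine-linear arguments --- for $\slto$ it is exactly \eqref{eq:modified-dimension} --- hence q-holonomic. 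Having verified every ingredient of Definition~\ref{def:q-holonomic-ribbon}, Lemma~\ref{lemma:q-hol-from-category} concludes the argument.

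The main obstacle is the one flagged in the introduction: producing, uniformly over the classification, explicit enough descriptions of the typical modules and of the generator actions to see directly that they land in this elementary q-holonomic library. For $\slto$ one can do everything by hand thanks to the small $\Uhsl{2}$-decomposition \eqref{eq:sl2-decomposition}; for $\mathfrak{sl}(m|n)$, the orthosymplectic series, and the exceptionals $D(2,1;\alpha)$, $G(3)$, $F(4)$, the PBW combinatorics, the normalization constants coming from the contravariant form (ratios of $q$-numbers such as the $\tfrac{[a_1]}{[a_1+1]}$ visible already for $\slto$), and the branching along a flag of subalgebras all become more involved, and one must check that no genuinely transcendental dependence on $\lambda$ --- beyond the formal $q^{\lambda_i}$ and $q^{\lambda_i\lambda_j}$ permitted by $\mathbb{V}_n$ --- creeps in. I expect that a suitably functorial description of the typical modules, e.g.\ via parabolic induction from the even subalgebra (which is reductive by hypothesis, and whose quantum group has a well-understood q-holonomic representation theory in the spirit of \cite{Garoufalidis_Le_2005}), would make each ingredient reduce mechanically to the closure properties of Section~\ref{sec:closure-properties}; setting up that description uniformly is the real work, and is why the statement is left as a conjecture.
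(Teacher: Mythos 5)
The statement you are addressing is a conjecture: the paper offers no proof of it, only the one-sentence remark that a proof ``would follow the same lines as our work here, with significantly more involved expressions for the action of the generators on the typical representations,'' and the introduction's observation that the main obstacle is the cumbersome representation theory. Your proposal is an expanded and faithful version of exactly that intended route --- reduce to Definition~\ref{def:q-holonomic-ribbon} and Lemma~\ref{lemma:q-hol-from-category}, then verify analogues of Lemmas~\ref{lemma:q-holonomic-action}, \ref{lemma:R-is-q-holonomic}, \ref{lemma:duality-is-q-holonomic}, and \ref{lemma:mod-dim-is-q-holonomic} using the Khoroshkin--Tolstoy factorization, the closure properties of Section~\ref{sec:closure-properties}, and a Weyl-type formula for the modified dimension --- and you correctly identify the genuine gap, namely the absence of uniform, explicit PBW-type bases and generator-action formulas for typical modules across Kac's classification that visibly land in the elementary q-holonomic library. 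Since that gap is precisely why the authors leave the statement as a conjecture, your text is an accurate roadmap rather than a proof, and it matches the paper's own assessment of both the strategy and the missing ingredient; the only point I would flag is that adapting the generic-semisimplicity argument of Proposition~\ref{P:catBraided} to a general basic $\mathfrak{g}$ requires replacing the $\C/\Z$-grading by a grading over $\h^*$ modulo the root lattice and re-verifying the ``symmetric and small'' condition there, which deserves explicit mention rather than the phrase ``without essential change.''
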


The proof of this conjecture would follow the same lines as our work here, with significantly more involved expressions for the action of the generators on the typical representations.

\section[The sl(2|1) invariant is q-holonomic]{The $\slto$ invariant is q-holonomic}\label{sec:its-holonomic} 

We show that $\Rep_h^{tf}\mathfrak{sl}(2|1)$ meets the conditions in Definition \ref{def:q-holonomic-ribbon}.
With that established, Lemma \ref{lemma:q-hol-from-category} immediately applies q-holonomicity of the $\slto$ quantum invariant.

We do not specify the ranks of the various q-Weyl algebras appearing in this section. One can take them to be the number of independent variables in the relevant functions.

\begin{lemma}\label{lemma:q-holonomic-action}
  The matrix coefficients of elements in $\langle E_1, E_2, F_1, F_2\rangle$ inside $\Uhsl{2|1}$ in each $\End(\qum{V}(a_1,a_2))$ form q-holonomic systems.
\end{lemma}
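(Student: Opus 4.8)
The plan is to read the matrix of each generator $E_1,E_2,F_1,F_2$ off the explicit action formulas of Section~\ref{sec:TypicalModules}, verify that its coefficients are assembled from elementary q-holonomic functions, and then transport q-holonomicity along arbitrary words in the generators using Lemma~\ref{lemma:holonomic-matrix-operations}.

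First I would fix a bookkeeping convention that turns $\End(\qum{V}(a_1,a_2))$ into a matrix algebra uniformly in $a_1$. The basis \eqref{eq:typical-module-basis} is indexed by a pair: a ``block'' $\beta\in\{v_{0,0},v_{-1,1},v_{1,0},v_{0,1}\}$ together with an exponent $k\in\N$, where the admissible range of $k$ depends on $\beta$ and on $a_1$. Listing the four blocks consecutively gives a piecewise-affine bijection between this pair-index and $\{1,\dots,4a_1+4\}$. Since q-holonomicity is preserved under affine substitution, case-splitting over a finite set, and multiplication by indicator functions $\vartheta_{[\,\cdot\,,\,\cdot\,]}$ (which are themselves q-holonomic), it is enough to prove q-holonomicity with respect to the pair-index: the block labels become finitely many cases, while $k,k',a_1$ are discrete variables and $a_2$ (more precisely $q^{a_2}$) a continuous one, so each matrix-coefficient function lands in a fixed $\mathcal{V}_{m,n}$ as in Section~\ref{sec:non-discrete-variables}.

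With this convention I would record, for each of $F_1,F_2,E_1,E_2$, the coefficient $(X)_{(\beta,k)}^{(\beta',k')}$ directly from the displayed formulas, splicing the separately stated low-$k$ values into the generic ones by means of indicator functions; note that the coefficient vanishes automatically whenever the target $F_1^{k'}v_{\beta'}$ falls outside its admissible range, so one only has to truncate $k$ and $k'$. In every case the result is a finite sum of products of: q-numbers $[\,\cdot\,]$ and their inverses evaluated at affine functions of $k,a_1,a_2$; the Laurent polynomials $P_n$, which satisfy $P_n=(q+q^{-1})P_{n-1}-P_{n-2}$ (indeed $P_n=[n+1]$) and are therefore q-holonomic in $n$, since a single nontrivial relation suffices in one variable; and indicator functions enforcing the index ranges. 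As all of these are q-holonomic and q-holonomic functions are closed under addition, multiplication, extension and affine substitution (Section~\ref{sec:closure-properties}), each generator has a q-holonomic matrix-coefficient function. The diagonal elements $q^{\pm h_i}$ occurring via $[E_i,F_i]$ are handled the same way, though they are not needed below.

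Finally, a general element of $\langle E_1,E_2,F_1,F_2\rangle$ is a finite $\C[[h]]$-linear combination of words $g_{i_1}\cdots g_{i_r}$ in the four generators, so its matrix is a finite sum of finite products of the generator matrices just described. Each such product is computed by iterated matrix multiplication $\sum_k X_i^k Y_k^j$ over the (parameter-dependent) index range, and Lemma~\ref{lemma:holonomic-matrix-operations} states precisely that matrix sums and products preserve q-holonomicity of the coefficients; hence the matrix coefficients of any such element form a q-holonomic system. The argument is conceptually routine, and I expect the only real care to be needed in the two preliminary steps: arranging the re-indexing so that it is uniform in $a_1$, and correctly splicing the finitely many boundary values of $E_2$ and $F_2$ into the generic formulas. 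Once the four generator matrices are recorded correctly, the conclusion is immediate.
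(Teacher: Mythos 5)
Your proposal is correct and follows essentially the same route as the paper: read the generator matrices off the explicit action formulas in the basis \eqref{eq:typical-module-basis} organized by the four $\Uhsl{2}$-blocks of \eqref{eq:sl2-decomposition}, splice the pieces together with indicator functions, observe that every entry is built from q-numbers, the polynomials $P_n$, and Kronecker deltas via the closure properties of Section \ref{sec:closure-properties}, and then propagate to arbitrary words using Lemma \ref{lemma:holonomic-matrix-operations}. Your explicit justification that $P_n=[n+1]$ is q-holonomic is a small point the paper leaves implicit, but otherwise the two arguments coincide.
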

Let $\rho_{a_1,a_2} : \Uhsl{2|1} \to \End_{\C[[h]]}(\qum{V}(a_1,a_2))$ be a typical representation.
The claim is that the functions $\N^3 \to \C[[h]](x_1,z_{11})[a_2,a_2^{-1}]$ defined by $(a_1,k,\ell) \mapsto \rho_{a_1,a_2}(X)_k^\ell$ are q-holonomic for any $X \in \langle E_1, E_2, F_1, F_2\rangle$.
Here $x_1$ and $z_{11}$ play the part of the formal expressions $q^{a_2}$ and $q^{a_2^2}$, as outlined in Section \ref{sec:non-discrete-variables},  $k,\ell = 0, \ldots, 4 a_1 + 3$ run over the basis elements \eqref{eq:typical-module-basis}, and $\rho_{a_1,a_2}(X) w_k = \sum_{\ell=0}^{4a_1 + 3} \rho_{a_1,a_2}(X)_k^\ell w_\ell$. 
The following proof relies heavily on the many closure properties of q-holonomic functions, see Section \ref{sec:closure-properties}.

\begin{proof}
  We will make use of the decomposition \eqref{eq:sl2-decomposition} of $\qum{V}(a_1,a_2)$ into four $\Uhsl{2}$-representations.
  In each of the four blocks, our basis is of the form $F_1^k v_{\epsilon_1,\epsilon_2}$ for $\epsilon_1,\epsilon_2 \in \{-1,0,1\}$ and $k = 0, \ldots a_1 + \epsilon_1$.
  For generators which respect the decomposition, q-holonomicity within the blocks implies it for the whole representation, since we build the full expressions from the $\slt$ ones by shifting their indices, multiplying them by the appropriate indicator functions, and adding the results.

  \paragraph{$\boxed{F_1}$}
  Within each $\Uhsl{2}$-subrepresentation, $\rho(F_1)_k^\ell$ has the form 
  \begin{equation}
    (a_1,a_2,k,\ell) \mapsto \delta_{k+1}^{\ell}\vartheta_{[0,a_1+\epsilon]}(k)
  \end{equation}
  where $\vartheta$ is the indicator function.
  Since indicator functions and their products are q-holonomic, we conclude that $F_1$'s coefficients in each typical representation are q-holonomic functions.

  \paragraph{$\boxed{E_1}$}
  We again work in the decomposition \eqref{eq:sl2-decomposition}.
  In each block, $\rho(E_1)_k^\ell$ has the form
  \begin{equation}
    (a_1,a_2,k,\ell) \mapsto [a_1+\epsilon +1-k] \delta_{k-1}^\ell \vartheta_{[0,a_1+\varepsilon]}(k)
  \end{equation}
  By closure under affine substitution, the expression $[n]= \{n\}/\{1\}$ sends linear functions to q-holonomic ones.
  We conclude that the coefficients of $E_1$ are q-holonomic in each typical representation.

  \paragraph{$\boxed{F_2}$}
  The action of this generator does not respect the decomposition \eqref{eq:sl2-decomposition}.
  To describe its coefficients we put a total ordering on the generators, first the $F_1^kv_{0,0}$, then the $F_1^kv_{-1,1}$, then the $F_1^kv_{1,0}$, and finally the $F_1^kv_{0,1}$.
  We will temporarily refer to the $\ell^{\text{th}}$ element in this order as $w_\ell$, so that $w_0 = v_{0,0}$, $w_{a_1+1} = v_{-1,1}$, $w_{2a_1 + 1} = v_{1,0}$, and $w_{3a_1 + 3} = v_{0,1}$. 

  The coefficients of $F_2$ on a typical representation $\qum{V}(a_1,a_2)$ are defined by the piecewise function
  \begin{equation}
    (a_1,a_2,k,\ell) \mapsto \begin{cases}
      P_{k-1} \delta^\ell_{a_1+k} + \left(P_{k-1}\frac{[a_1]}{[a_1+1]} - P_{k-2}\right) \delta^\ell_{2a_1+k+1}
      & 0 \leq k \leq a_1 \\
      \left(P_{k-a_1-2} - \frac{[a_1]}{[a_1+1]} P_{k-a_1-1}\right) \delta^\ell_{2 a_1 + 2 + k}
      & a_1 + 1 \leq k \leq 2 a_1 \\
      P_{k-2a_1-2}^{}\;\delta^\ell_{a_1+1+k}
      & 2a_1 + 1 \leq k \leq 3a_1 +2 \\
      0
      & 3 a_1 + 3 \leq k \leq 4 a_1 + 3 \\
      0 & \text{otherwise.}
    \end{cases}
  \end{equation}

  Since a piecewise q-holonomic function is likewise q-holonomic, we conclude that the coefficients of $F_2$ are q-holonomic.

  \paragraph{$\boxed{E_2}$}
  This generator does not respect the decomposition \eqref{eq:sl2-decomposition}.
We reuse the basis $\{w_\ell\}$ defined in the $F_2$ section of this proof.
The coefficients of $E_2$ on a typical representation $\qum{V}(a_1,a_2)$ are
  \begin{equation}
    (a_1,a_2,k,\ell) \mapsto \begin{cases}
      0
      & 0 \leq k \leq a_1 \\
      \left([a_2+1] - \frac{[a_1][a_2]}{[a_1+1]}\right) \delta^\ell_{k-a_1}
      & a_1 +1 \leq k \leq 2a_1 \\
      [a_2] \delta^\ell_{k-2 a_1}
      & 2a_1 + 1 \leq k \leq 3a_1 + 2 \\
      [a_2] \delta^\ell_{k-2a_1-2} + \left(\frac{[a_1][a_2]}{[a_1+1]} + [a_2+1]\right) \delta^\ell_{k-a_1-2}
      & 3a_1 + 3 \leq k \leq 4a_1 + 3 \\
      0 & \text{otherwise.}
    \end{cases}
  \end{equation}
  We conclude that the actions of the generators are in terms of q-holonomic functions.
  Finally, by Lemma \ref{lemma:holonomic-matrix-operations}, every element in $\Uhsl{2|1}$ acts by a matrix whose coefficients form a q-holonomic system.
\end{proof}

\begin{lemma}\label{lemma:R-is-q-holonomic}
  The $R$-Matrix given by the product of \eqref{eq:Rcheck} and \eqref{eq:K} has q-holonomic coefficients, as does its inverse \eqref{eq:R-inverse}.
\end{lemma}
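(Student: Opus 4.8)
The strategy is to express both $\check{R}$ and $K$ as explicit matrices acting on $\qum{V}(a_1,a_2) \otimes \qum{V}(b_1,b_2)$, and then observe that each factor appearing in the formulas \eqref{eq:Rcheck} and \eqref{eq:K} has q-holonomic coefficients, so that the closure properties of Lemma \ref{lemma:holonomic-matrix-operations} finish the job. The parameters are the four discrete variables $a_1,b_1$ and complex variables $a_2,b_2$ (formally $x_1,x_2$ for $q^{a_2},q^{b_2}$ and the quadratic analogs $z_{ij}$), together with the matrix indices running over the tensor-product basis \eqref{eq:typical-module-basis}. Since $K$ is a group-like exponential in the Cartan generators, its action on a tensor product of weight vectors is diagonal with entry $q^{-\mu_1\nu_2 - \mu_2\nu_1 - 2\mu_2\nu_2}$, where $(\mu_1,\mu_2)$ and $(\nu_1,\nu_2)$ are the $h_1,h_2$ weights of the two basis vectors. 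By the weight formulas following \eqref{eq:h2-action}, these weights are affine-linear in $(a_1,a_2,k)$ and $(b_1,b_2,\ell)$; hence each such entry is (formally) a product of linear and quadratic q-powers $x_i, z_{ij}$ times an indicator function cutting out the valid index range, and is therefore q-holonomic by the examples collected in Section 4.3.

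For $\check{R}$, the plan is to handle the three exponential factors separately and then multiply. First, note $\exp_q(\qn1\, E_1\otimes F_1) = \sum_{n\ge 0} \frac{\qn1^n}{(n)_q!} E_1^n \otimes F_1^n$; this is actually a \emph{finite} sum on any given tensor product since $F_1^{a_1+2}$ already annihilates each $\Uhsl{2}$-block, so the truncated sum $\sum_{n=0}^{N}$ (with $N = \max(a_1,b_1)+2$ an affine-linear cutoff) suffices. By Lemma \ref{lemma:q-holonomic-action} the coefficients of $E_1$ and $F_1$ are q-holonomic, hence so are those of $E_1^n$ and $F_1^n$ (products, with $n$ now an extra discrete variable), hence so are those of $E_1^n\otimes F_1^n$ (tensor product), hence so is the multisum over $n = 0,\dots,N$ once we multiply by the scalar $\qn1^n/(n)_q!$ — and $1/(n)_q!$ is q-holonomic in $n$ since it is essentially an inverse q-Pochhammer (see Section 4.3), while $\qn1^n$ is a q-power. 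The same reasoning applies to $\exp_q(-\qn1\, E_2\otimes F_2)$, which moreover truncates at $n \le 1$ because $E_2^2 = F_2^2 = 0$. For the middle factor $\exp_q(-\qn1\, E'\otimes F')$ with $E' = E_1E_2 - q^{-1}E_2E_1$ and $F' = F_2F_1 - qF_1F_2$: the coefficients of $E'$ and $F'$ are q-holonomic by Lemma \ref{lemma:q-holonomic-action} (they are built from $E_1,E_2,F_1,F_2$ by products, sums, and multiplication by the q-power $q^{-1}$ resp. $q$), and again the exponential truncates at small $n$ since $E'$ and $F'$ are nilpotent of bounded order (indeed $E'$ involves two $E_2$'s at second power and hence $E'^2$ already has an $E_2^2$, so $n\le 1$). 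Multiplying the three q-holonomic matrices via Lemma \ref{lemma:holonomic-matrix-operations} shows $\check{R}$ has q-holonomic coefficients, and then $R = \check{R}K$ does too.

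For the inverse, one repeats the argument verbatim on the expression \eqref{eq:R-inverse}: $K^{-1}$ is diagonal with entries $q^{+\mu_1\nu_2+\mu_2\nu_1+2\mu_2\nu_2}$ (still q-holonomic), and the three factors $\exp_{q^{-1}}(\pm\qn1\, E_i\otimes F_i)$, $\exp_{q^{-1}}(\qn1\, E'\otimes F')$ are truncated sums of tensor products of the same q-holonomic building blocks, now with $1/(n)_{q^{-1}}!$ in place of $1/(n)_q!$ (equally an inverse q-Pochhammer, hence q-holonomic). Another application of Lemma \ref{lemma:holonomic-matrix-operations} gives the result. \textbf{The main obstacle} is purely bookkeeping: one must be careful that the nilpotency bounds used to truncate the q-exponentials are affine-linear in the discrete parameters (so that "sum up to $N(a_1,b_1)$" is a legitimate q-holonomic multisum — formally one sums to $\infty$ against an indicator function $\vartheta_{[0,N]}$), and that the super-signs from the flip map $\tau$ entering the braiding $c = \tau\circ R$ are themselves q-holonomic functions of the indices. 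The sign $(-1)^{\p x\p y}$ is a function of the parities of the two basis vectors, which by the structure of \eqref{eq:typical-module-basis} depends only on which block $k$ and $\ell$ lie in and on $k,\ell \bmod 2$ in a way that is constant on index intervals — hence expressible via indicator functions and therefore q-holonomic; but this is the one place where a short explicit check is genuinely required rather than a formal invocation of closure properties.
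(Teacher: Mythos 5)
Your overall strategy is the same as the paper's: split $R$ into the three $q$-exponential factors plus $K$, use nilpotency of $E_2\otimes F_2$ and $E'\otimes F'$ to truncate two of the exponentials at $n\le 1$, treat $K$ as a diagonal matrix of mixed linear/quadratic $q$-powers in the weights, and assemble everything with Lemma \ref{lemma:holonomic-matrix-operations}. Your treatment of $K$ (entries $q^{-\mu_1\nu_2-\mu_2\nu_1-2\mu_2\nu_2}$ with $\mu,\nu$ affine-linear in the parameters \emph{and} the basis indices) is in fact stated more carefully than the paper's displayed formula, and your remarks about the indicator-function cutoff for the truncated sums and about the super-signs in $\tau$ are sound (the latter belongs to the braiding rather than to $R$ itself, but it is a legitimate concern for the downstream application).

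The one genuine gap is the step ``the coefficients of $E_1$ and $F_1$ are q-holonomic, hence so are those of $E_1^n$ and $F_1^n$ (products, with $n$ now an extra discrete variable).'' Closure under products gives q-holonomicity of $X^n$ for each \emph{fixed} $n$, but not joint q-holonomicity in $(n,i,j)$: writing $(X^n)_i^j$ as an iterated matrix product requires a number of summation indices that grows with $n$, which is not a multisum in the sense of Section \ref{sec:closure-properties}, and there is no general closure property for matrix powers with the exponent as a variable. This is precisely why the paper does not argue this way; it instead writes the explicit closed form
\begin{equation*}
E_1^n\cdot F_1^k v_{\epsilon_1,\epsilon_2}=\frac{[k]!}{[k-n]!}\,\frac{[a_1+\epsilon_1-(k+1)+n]!}{[a_1+\epsilon_1-(k+1)]!}\,F_1^{k-n}v_{\epsilon_1,\epsilon_2}
\end{equation*}
(and $F_1^n\cdot F_1^k v_{\epsilon_1,\epsilon_2}=F_1^{n+k}v_{\epsilon_1,\epsilon_2}$), whose coefficients are ratios of quantum factorials and hence manifestly q-holonomic jointly in $(a_1,k,n)$ via the $1/[m]!$ identity. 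Your argument is repaired by inserting exactly this computation; without it, the claim that $\exp_q(\qn{1}E_1\otimes F_1)$ has q-holonomic coefficients does not follow from the closure properties you invoke. The same caveat would apply to your implicit use of powers in the inverse $R$-matrix, though there too the explicit formulas resolve it.
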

Our claim is that the function $\N^6 \to \mathbb{V}_2$, sending 
\begin{equation}
  (a_1,k_a,\ell_a,b_1,k_b,\ell_b) \mapsto (R)_{k_a,k_b}^{\ell_a,\ell_b} \in \C[[h]](q^{a_2}, q^{b_2}, q^{a_2^2}, q^{b_2^2}, q^{a_2b_2})
\end{equation}
defined by the action of $R$ on $\qum{V}(a_1,a_2)\otimes \qum{V}(b_1,b_2)$ is q-holonomic.
Since the functions coming from the $R$-matrix and its inverse differ only by negative signs in exponential expressions, it's sufficient to show the result just for $R$.
\begin{proof}
  We consider the four factors $\exp_q(-\{1\}E_2\otimes F_2)$, $\exp_q(-\{1\}E' \otimes F')$, $\exp_q(\{1\}E_1\otimes F_1)$, and $K$.
  First, note that 
      \begin{equation}
        \exp_q(-\{1\}E_2\otimes F_2) = 1 -\{1\} E_2 \otimes F_2\quad\text{and}\quad %
        \exp_q(-\{1\}E'\otimes F') = 1 + -\{1\} E'\otimes F'.
      \end{equation}
      It follows from Lemma \ref{lemma:q-holonomic-action} and Lemma \ref{lemma:holonomic-matrix-operations} that these act by q-holonomic functions. 
    For the $E_1\otimes F_1$ factor, the relevant functions come from the expressions
    \begin{equation}
      E_1^n \cdot F_1^k v_{\epsilon_1,\epsilon_2} = \begin{cases}
        0 & 1 \leq k \leq n\\
        \frac{[k]!}{[k-n]!}\frac{[a_1 + \epsilon_1 - (k+1) +n]!}{[a_1+\epsilon_1 - (k+1)]!} F_1^{k-n} v_{\epsilon_1,\epsilon_2} & \text{otherwise}
      \end{cases}
      \end{equation}
      Note that $1/[m]! = q^{m(m+1)/2} \{1\}^m (q^2 ; q^2)_m$ is q-holonomic for $m$ any discrete variable or linear combination thereof
      and $F_1^n \cdot F_1^k v_{\epsilon_1,\epsilon_2} = F_1^{n+k} v_{\epsilon_1,\epsilon_2}$.
      We conclude that the coefficients
      \begin{equation}
        (a_1,k_a,\ell_a,b_1,k_b,\ell_b,n) \mapsto (E_1^n \otimes F_1^n)_{k_a,k_b}^{\ell_a,\ell_b} \in \mathbb{V}_2
      \end{equation}
      defined by the action on $\qum{V}(a_1,a_2)\otimes \qum{V}(b_1,b_2)$ are q-holonomic.
      Since q-holonomicity is preserved by multisums and $1/(n)_q!$ is q-holonomic, we conclude that $\exp_q(\{1\} E_1\otimes F_1)$ is q-holonomic.
  
      Finally, $K = q^{-h_1\otimes h_2 - h_2\otimes h_1 - 2h_2 \otimes h_2}$ acts on $\qum{V}(a_1,a_2)\otimes \qum{V}(b_1,b_2)$ by 
      \begin{equation}
        (a_1,k_a,\ell_a,b_1,k_b,\ell_b) \mapsto q^{-a_1b_2 - a_2b_1 -2a_2b_2} \delta_{k_a}^{\ell_a} \delta_{k_b}^{\ell_b} \in \mathbb{V}_2.
      \end{equation}
      These are a combination of quadratic exponentials (in mixed discrete and continuous variables) together with indicator functions.
      It follows that the coefficients of $K$ in any typical representation induce q-holonomic systems.

      We've shown that each of the four factors of the R-Matrix is q-holonomic.
      It follows from Lemma \ref{lemma:holonomic-matrix-operations} that the R-Matrix has q-holonomic coefficients in each typical representation.
\end{proof}

\begin{lemma}\label{lemma:duality-is-q-holonomic}
  The evaluation and coevalution induce q-holonomic systems.
\end{lemma}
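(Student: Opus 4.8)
The plan is to write out the matrix coefficients of $\coev$, $\ev$, $\tcoev$, and $\tev$ on each typical module $\qum{V}(a_1,a_2)$ with respect to the basis \eqref{eq:typical-module-basis} and the corresponding dual basis of $\qum{V}(a_1,a_2)^*$, and to observe that every coefficient is a product of ingredients already known to be q-holonomic: a Kronecker delta on basis indices (truncated by an indicator $\vartheta_{[0,4a_1+3]}$ to implement the ``extend by zero'' convention of Definition \ref{def:q-holonomic-ribbon}), a parity sign $(-1)^{\p{w_i}}$, and an eigenvalue of $q^{\pm 2h_2}$.

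First I would dispatch $\coev$ and $\ev$. By \eqref{eq:pivotal-structure} these are, up to swapping source and target, the canonical pairing: the coefficient of $w_i\otimes w_j^*$ in $\coev_{\qum{V}}(1)$, and of $w_i^*\otimes w_j$ in $\ev_{\qum{V}}$, is simply $\delta_i^j$. As delta and indicator functions are q-holonomic and closed under products, these cases are immediate.

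Next I would treat $\tcoev$ and $\tev$. From \eqref{eq:pivotal-structure}, $\tcoev_{\qum{V}}(1)=\sum_i (-1)^{\p{w_i}}\, w_i^*\otimes q^{2h_2}w_i$ and $\tev_{\qum{V}}(x\otimes f)=(-1)^{\p x\p f} f(q^{-2h_2}x)$. Since $h_2$ acts diagonally on the basis \eqref{eq:typical-module-basis} with eigenvalue the $h_2$-weight $\mu(i)$ given in \eqref{eq:h2-action}, the only nonzero coefficient of $\tcoev$ is that of $w_i^*\otimes w_i$, equal to $(-1)^{\p{w_i}}q^{2\mu(i)}$; similarly the only nonzero pairing under $\tev$ is $w_i$ against $w_i^*$, with value $(-1)^{\p{w_i}}q^{-2\mu(i)}$ (using $\p{w_i^*}=\p{w_i}$ and that squaring is the identity on $\Z_2$). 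It then suffices to see that $i\mapsto(-1)^{\p{w_i}}$ and $i\mapsto q^{\pm2\mu(i)}$ are q-holonomic in $(a_1,i)$, and, for the latter, $a_2$. Counting occurrences of the odd generator $F_2$ in \eqref{eq:typical-module-basis} shows that the highest weight vectors $v_{0,0},v_{-1,1},v_{1,0},v_{0,1}$ are even, odd, odd, even respectively, and since $F_1$ is even the parity is constant on each of the four blocks of \eqref{eq:sl2-decomposition}; with the basis ordered as the $w_\ell$ of the proof of Lemma \ref{lemma:q-holonomic-action} this reads $(-1)^{\p{w_\ell}}=1-2\,\vartheta_{[a_1+1,\,3a_1+2]}(\ell)$, which is q-holonomic. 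By \eqref{eq:h2-action}, $\mu(\ell)$ equals $a_2$ plus a function of $(\ell,a_1)$ that is affine on each of the four blocks, so $q^{\pm2\mu(\ell)}=q^{\pm2a_2}$ times a sum over the four blocks of a linear exponential in the discrete variables multiplied by the indicator of that block; since $q^{\pm2a_2}$ is the formal variable $x_1^{\pm2}$ and hence q-holonomic, so is $q^{\pm2\mu(\ell)}$. Products of q-holonomic functions being q-holonomic, all four families of coefficients form q-holonomic systems, establishing the lemma.

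The step requiring care is the same bookkeeping already met for $E_2$ and $F_2$ in Lemma \ref{lemma:q-holonomic-action}: the basis \eqref{eq:typical-module-basis} is glued from four blocks of $a_1$-dependent sizes, so quantities such as the $F_1$-power of $w_\ell$, or the indicator of each block, are only piecewise affine in the flattened index $\ell$ with break points that move with $a_1$; rewriting them as sums and products of the indicator functions $\vartheta_{[n_1,n_2]}$ and of linear exponentials is routine, but it is where the argument has to be spelled out.
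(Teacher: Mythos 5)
Your proposal is correct and follows essentially the same route as the paper: write the four coefficient families explicitly as Kronecker deltas times, for $\tcoev$ and $\tev$, a parity sign and a $q^{\pm 2h_2}$ eigenvalue, then invoke closure under products. The paper simply asserts that the sign and the piecewise-linear $h_2$-weight exponential are standard q-holonomic functions, whereas you spell out the block-by-block bookkeeping; the content is the same.
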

We are claiming that the following four functions $\N^3 \to \mathbb{V}_1$ are q-holonomic:
\begin{align*}
  (a_1,\ell_1,\ell_2) &\mapsto \left(\coev_{\qum{V}(a_1,a_2)}\right)^{\ell_1,\ell_2}
                      & (a_1,\ell_1,\ell_2) &\mapsto \left(\ev_{\qum{V}(a_1,a_2)}\right)_{k_1,k_2} \\
  (a_1,\ell_1,\ell_2) &\mapsto \left(\tcoev_{\qum{V}(a_1,a_2)}\right)^{\ell_1,\ell_2}
                      & (a_1,k_1,k_2) &\mapsto \left(\tev_{\qum{V}(a_1,a_2)}\right)_{k_1,k_2}
\end{align*}

\begin{proof}
  The functions in question are 
  \begin{align*}
    (a_1,\ell_1,\ell_2) &\mapsto \delta^{\ell_1,\ell_2}
                            & (a_1,k_1,k_2) &\mapsto \delta_{k_1,k_2} \\
    (a_1,\ell_1,\ell_2) &\mapsto (-1)^{\overline{w}_{\ell_1}} q^{2 \phi(\ell_1)} \delta^{\ell_1,\ell_2} 
                            & (a_1,k_1,k_2) &\mapsto (-1)^{\overline{w}_{k_1}\overline{w}_{k_2}} q^{-2 \phi(k_1)} \delta_{k_1,k_2}
  \end{align*}
  where $\phi(\ell)$ is the $h_2$-weight of the basis vector $w_\ell$ defined in the proof of Lemma \ref{lemma:q-holonomic-action}.
  It's a piecewise linear function in $a_1,a_2$, and $\ell$.
  Each expression is a product of standard q-holonomic functions and therefore q-holonomic.
\end{proof}

\begin{lemma}\label{lemma:mod-dim-is-q-holonomic}
The modified dimension for typical modules in $\cat$ is q-holonomic
\end{lemma}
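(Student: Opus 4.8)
The plan is to exhibit the modified dimension \eqref{eq:modified-dimension} as a finite product of functions already known to be q-holonomic and to conclude by closure under multiplication. Regarding $a_1$ as a discrete ($\N$-valued) variable and $a_2$ as a complex one in the sense of Section~\ref{sec:non-discrete-variables}, I write
\begin{equation*}
  \md(\qum{V}(a_1,a_2)) \;=\; \qn{a_1+1}\cdot\frac{1}{\qn 1}\cdot\frac{1}{\qn{a_2}}\cdot\frac{1}{\qn{a_1+a_2+1}}\,.
\end{equation*}
This factorization is defined exactly on the set of typical weights, where $a_2\neq 0$ and $a_1+a_2+1\neq 0$; since that set is the complement of a hyperplane arrangement in $\Z_{\geq 0}\times\C$ it has codimension zero, so $\md$ is a function on $\Lambda$ valued in $\mathbb{V}_1$ of the kind appearing in Definition~\ref{def:q-holonomic-ribbon}.

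It then remains to treat the four factors. The function $n\mapsto\qn n = q^n-q^{-n}$ is a difference of the linear powers $q^n$ and $q^{-n}$, hence q-holonomic, and composing with the affine substitution $n\mapsto a_1+1$ shows $\qn{a_1+1}$ is q-holonomic; the constant $\qn 1^{-1}$ is trivially q-holonomic. Both $1/\qn{a_2}$ and $1/\qn{a_1+a_2+1}$ are q-holonomic by the inverse q-numbers example: one rewrites $1/\qn n = q^{-n}/(q^{-2n};q)_1$ as a product of a linear power and an inverse q-Pochhammer and invokes the closure properties of Section~\ref{sec:closure-properties}, reading $a_2$ as a complex variable via Section~\ref{sec:non-discrete-variables} and, for the second factor, precomposing with the affine substitution $n\mapsto a_1+a_2+1$ in the mixed discrete/complex variables.

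Finally, q-holonomic functions are closed under products (Section~\ref{sec:closure-properties}), so the displayed product, which is $\md$, is q-holonomic, completing the proof. I do not expect a genuine obstacle here: the only point that needs care is that the denominators $\qn{a_2}$ and $\qn{a_1+a_2+1}$ are nonvanishing on $\Lambda$, which is exactly the typicality condition, so $\md$ is a bona fide element of $\mathbb{V}_1$ in the discrete variable $a_1$ and the closure machinery of Section~\ref{sec:closure-properties} applies directly.
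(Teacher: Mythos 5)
Your proof is correct and follows essentially the same route as the paper's: the modified dimension is factored into terms of the form $\qn{n}$ and $1/\qn{n}$ under affine substitution in the discrete variable $a_1$ and the complex variable $a_2$, and q-holonomicity follows from the worked examples and closure under products. Your version simply spells out the factorization and the appeal to the inverse q-Pochhammer rewriting explicitly.
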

\begin{proof}
  The expression \eqref{eq:modified-dimension} is a product of terms $\{n\}$ and $1/\{n\}$, modified by affine substitution in continuous and discrete variables.
  It is therefore q-holonomic.
\end{proof}

\begin{corollary}\label{cor:slto-is-q-holonomic}
  The $\slto$ quantum invariant is q-holonomic.
\end{corollary}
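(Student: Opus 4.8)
The plan is to deduce the corollary directly from Lemma \ref{lemma:q-hol-from-category}: I would show that the ribbon category $\cat = \Rep_h^{tf}\slto$, together with the collection $S_\Lambda := \{\qum{V}(a_1,a_2)\}$ of typical modules, satisfies the hypotheses of Definition \ref{def:q-holonomic-ribbon}, i.e. that its ribbon structure is q-holonomic over $S_\Lambda$. Here the index set is $\Lambda = \Z_{\geq 0}\times\C^*$ (the typicality conditions $a_1+a_2+1\neq 0$ and $a_2\neq 0$ cut out a hyperplane arrangement), which embeds in $\Z^1\times\C^1$ with codimension zero, so the identifications of Section \ref{sec:non-discrete-variables} apply; one records that $\A = \C[[h]]$ is an integral domain and that the hom-spaces of $\cat$ are finite rank free $\C[[h]]$-modules, as required by the footnote to Definition \ref{def:q-holonomic-ribbon}. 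I would also note that each $\qum{V}(a_1,a_2)$ is simple in the sense of Definition \ref{def:simple}, so Lemma \ref{lemma:q-hol-from-category} is applicable.

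First I would verify that $S_\Lambda$ is closed under duals. From the proof of Proposition \ref{P:catBraided} we have $\qum{V}(a_1,a_2)^* \simeq \qum{V}(a_1,-a_1-a_2-1)$, and the typicality inequalities for the dual, namely $a_1 + (-a_1-a_2-1) + 1 = -a_2 \neq 0$ and $-a_1-a_2-1 \neq 0$, are exactly the original ones; hence the dual of a typical module is typical. Moreover $\lambda \mapsto \lambda^*$ is an affine substitution $(a_1,a_2)\mapsto(a_1,-a_1-a_2-1)$ on the parameter space, which preserves q-holonomicity by the closure properties of Section \ref{sec:closure-properties}.

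Next I would check the five families of functions in Definition \ref{def:q-holonomic-ribbon} one by one. The \emph{dimension} function is $\lambda \mapsto \dim_\A \qum{V}(a_1,a_2) = 4a_1+4$, an affine function of the discrete variable $a_1$, hence q-holonomic. The \emph{modified dimension} is q-holonomic by Lemma \ref{lemma:mod-dim-is-q-holonomic}, and the tensor ideal of modules with vanishing quantum dimension used to define $\md$ contains all typical modules, as needed. For the \emph{braiding} $c_{\qum V,\qum W} = \tau\circ R$: the coefficients of $R$ (and of $R^{-1}$) in $\qum{V}(a_1,a_2)\otimes\qum{V}(b_1,b_2)$ are q-holonomic by Lemma \ref{lemma:R-is-q-holonomic}, while the super flip $\tau$ acts on matrix coefficients by an index permutation together with a sign $(-1)^{\overline{w}_i\overline{w}_j}$, which is a piecewise affine reindexing and hence preserves q-holonomicity; composing with $R$ preserves it by Lemma \ref{lemma:holonomic-matrix-operations}. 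The \emph{inverse braiding} is handled identically using \eqref{eq:R-inverse}. Finally the \emph{rigidity} morphisms $\coev,\ev,\tcoev,\tev$ have q-holonomic coefficients by Lemma \ref{lemma:duality-is-q-holonomic}. Having verified all conditions, Lemma \ref{lemma:q-hol-from-category} gives that the quantum invariant $F'(L)$ of \eqref{eq:sl21-invariant-2} is q-holonomic for every link $L$, which is the claim.

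The routine work has all been packaged into the preceding lemmas, so I do not expect a genuine obstacle here; the one point needing a little care is bookkeeping -- confirming that the index conventions (extension of row/column indices to $\N$ by zero, the total ordering of the basis \eqref{eq:typical-module-basis} used for $E_2,F_2$, and the super flip's action on tensor-product indices) are consistent across Lemmas \ref{lemma:q-holonomic-action}--\ref{lemma:duality-is-q-holonomic} and Definition \ref{def:q-holonomic-ribbon}, and that the $\W$-action implicit in Definition \ref{def:q-holonomic-ribbon} is the one transported to the invariant, as remarked after the statement of Lemma \ref{lemma:q-hol-from-category}.
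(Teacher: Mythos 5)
Your proposal is correct and follows exactly the route the paper takes: Corollary \ref{cor:slto-is-q-holonomic} is deduced from Lemma \ref{lemma:q-hol-from-category} once Lemmas \ref{lemma:q-holonomic-action}--\ref{lemma:mod-dim-is-q-holonomic} establish the hypotheses of Definition \ref{def:q-holonomic-ribbon}. In fact you are somewhat more thorough than the paper's one-line proof, since you explicitly verify closure of $S_\Lambda$ under duals and account for the super flip $\tau$ in the braiding, both of which the paper leaves implicit.
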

\begin{proof}
  This follows from Lemma \ref{lemma:q-hol-from-category}, which is applicable following Lemmas \ref{lemma:R-is-q-holonomic}, \ref{lemma:duality-is-q-holonomic}, and \ref{lemma:mod-dim-is-q-holonomic}. 
\end{proof}

\bibliographystyle{alpha}
\bibliography{references.bib}

@article{Geer_Patureau-Mirand_2007,
  title={Multivariable link invariants arising from $sl(2|1)$ and the {A}lexander polynomial},
  volume={210},
  ISSN={0022-4049},
  DOI={10.1016/j.jpaa.2006.09.015},
  number={1},
  journal={Journal of Pure and Applied Algebra},
  author={Geer, Nathan and {Patureau-Mirand}, Bertrand},
  year={2007},
  month=jul, pages={283–298},
  language={en}
}

@article{Geer_2007,
  title={Some remarks on quantized {L}ie superalgebras of classical type},
 	volume={314},
 	ISSN={0021-8693},
 	DOI={10.1016/j.jalgebra.2006.10.045},
 	number={2},
 	journal={Journal of Algebra},
 	author={Geer, Nathan},
 	year={2007},
 	month=aug, pages={565–580} }

@article{Geer_2005,
  title={The {K}ontsevich integral and quantized {L}ie superalgebras},
 	volume={5},
 	ISSN={1472-2739},
 	DOI={10.2140/agt.2005.5.1111},
 	number={3},
 	journal={Algebraic \& Geometric Topology},
 	publisher={Mathematical Sciences Publishers},
 	author={Geer, Nathan},
 	year={2005},
 	month=sep, pages={1111–1139} }

@article{Geer_EKquantization_2006, title={{Etingof–Kazhdan quantization of Lie superbialgebras}},
 	volume={207},
 	ISSN={0001-8708},
 	DOI={10.1016/j.aim.2005.11.005},
 	number={1},
 	journal={Advances in Mathematics},
 	publisher={Academic Press},
 	author={Nathan Geer},
 	year={2006},
 	month=dec, pages={1–38},
 	language={en-US} }

@article{Geer_Patureau-Mirand_2018,
   title={The trace on projective representations of quantum groups},
	 volume={108},
	 ISSN={1573-0530},
	 DOI={10.1007/s11005-017-0993-4},
	 number={1},
	 journal={Letters in Mathematical Physics},
	 author={Geer, Nathan and {Patureau-Mirand}, Bertrand},
	 year={2018},
	 month=jan, pages={117–140},
   note={arXiv:1610.09129 [math]},
	 language={en}
 }

@article{Kac_1977,
  title={{L}ie superalgebras},
 	volume={26},
 	url={https://core.ac.uk/download/pdf/81957395.pdf},
 	number={1},
 	journal={Advances in mathematic},
 	author={Kac, Victor G},
 	year={1977}
}

@inproceedings{Kac_1978,
  address={Berlin, Heidelberg},
 	series={Lecture Notes in Mathematics},
 	title={Representations of classical {L}ie superalgebras},
 	ISBN={978-3-540-35721-6},
 	DOI={10.1007/BFb0063691},
 	booktitle={Differential Geometrical Methods in Mathematical Physics II},
 	publisher={Springer},
 	author={Kac, V.},
 	editor={Bleuler, Konrad and Reetz, Axel and Petry, Herbert Rainer},
 	year={1978},
 	pages={597–626},
 	collection={Lecture Notes in Mathematics},
 	language={en} }

@article{Khoroshkin_Tolstoy_1991,
    title={Universal {R}-matrix for quantized (super)algebras},
 	volume={141},
 	ISSN={0010-3616, 1432-0916},
 	DOI={10.1007/BF02102819},
 	number={3},
 	journal={Communications in Mathematical Physics},
 	author={Khoroshkin, S. M. and Tolstoy, V. N.},
 	year={1991},
 	month=nov, pages={599–617},
 	language={en} }

@book{Turaev_2010, edition={2},
 	title={Quantum invariants of knots and 3-manifolds},
 	volume={18},
 	publisher={Walter de Gruyter GmbH \& Co KG},
 	author={Turaev, Vladimir},
 	year={2010} }

@article{Yamane_1994,
   title={Quantized Enveloping Algebras Associated with Simple {Lie} Superalgebras and Their Universal {R}-matrices},
 	volume={30},
 	ISSN={0034-5318},
 	DOI={10.2977/prims/1195166275},
 	number={1},
 	journal={Publications of the Research Institute for Mathematical Sciences},
 	author={Yamane, Hiroyuki},
 	year={1994},
 	month=feb, pages={15–87},
 	language={en} }

@article{Reshetikhin_Turaev_1990, title={Ribbon graphs and their invaraints derived from quantum groups},
 	volume={127},
 	ISSN={0010-3616, 1432-0916},
 	url={http://link.springer.com/10.1007/BF02096491},
 	DOI={10.1007/BF02096491},
 	number={1},
 	journal={Communications in Mathematical Physics},
 	author={Reshetikhin, N. Y. and Turaev, V. G.},
 	year={1990},
 	month=jan, pages={1–26},
 	language={en} }

@article{Geer_Patureau-Mirand_Turaev_2009, title={Modified quantum dimensions and re-normalized link invariants},
 	volume={145},
 	number={1},
 	journal={Compositio Mathematica},
 	author={Geer, Nathan and {Patureau-Mirand}, Bertrand and Turaev, Vladimir},
 	year={2009},
 	pages={196–212} }

@article{Jordan_Le_Schrader_Shapiro_2021, title={Quantum decorated character stacks},
	url={http://arxiv.org/abs/2102.12283},
	abstractNote={We initiate the study of decorated character stacks and their quantizations using the framework of stratiﬁed factorization homology. We thereby extend the construction by Fock and Goncharov of (quantum) decorated character varieties to encompass also the stacky points, in a way that is both compatible with cutting and gluing and equivariant with respect to canonical actions of the modular group of the surface. In the cases G “ SL2, PGL2 we construct a system of categorical charts and ﬂips on the quantum decorated character stacks which generalize the well–known cluster structures on the Fock–Goncharov moduli spaces.},
	note={arXiv: 2102.12283},
	journal={arXiv:2102.12283 [math]},
	author={Jordan, David and Le, Ian and Schrader, Gus and Shapiro, Alexander},
	year={2021},
	month=feb, language={en} }

@article{Ayala_Francis_Tanaka_2017, title={Factorization homology of stratified spaces},
	volume={23},
	ISSN={1420-9020},
	url={https://doi.org/10.1007/s00029-016-0242-1},
	DOI={10.1007/s00029-016-0242-1},
	abstractNote={This work forms a foundational study of factorization homology, or topological chiral homology, at the generality of stratified spaces with tangential structures. Examples of such factorization homology theories include intersection homology, compactly supported stratified mapping spaces, and Hochschild homology with coefficients. Our main theorem characterizes factorization homology theories by a generalization of the Eilenberg–Steenrod axioms; it can also be viewed as an analogue of the Baez–Dolan cobordism hypothesis formulated for the observables, rather than state spaces, of a topological quantum field theory. Using these axioms, we extend the nonabelian Poincaré duality of Salvatore and Lurie to the setting of stratified spaces—this is a nonabelian version of the Poincaré duality given by intersection homology. We pay special attention to the simple case of singular manifolds whose singularity datum is a properly embedded submanifold, and give a further simplified algebraic characterization of these homology theories. In the case of 3-manifolds with one-dimensional submanifolds, these structures give rise to knot and link homology theories.},
	number={1},
	journal={Selecta Mathematica},
	author={Ayala, David and Francis, John and Tanaka, Hiro Lee},
	year={2017},
	month=jan, pages={293–362},
	language={en} }

@article{Sabbah_1993, title={Systèmes holonomes d’èquations aux q-diffèrences},
 	url={https://perso.pages.math.cnrs.fr/users/claude.sabbah/articles/eqdfbis.pdf},
 	journal={D-modules and mircolocal geometry},
 	author={Sabbah, Claude},
 	editor={M. Kashiwara, T. Monteiro-Fernandes and Schapira, P.},
 	year={1993},
 	pages={125–147},
 	language={fr} }

@article{Garoufalidis_Le_2016, title={A survey of q-holonomic functions},
 	url={http://arxiv.org/abs/1601.07487},
 	note={arXiv: 1601.07487},
 	journal={arXiv},
 	author={Garoufalidis, Stavros and L{\^e}, Thang T. Q.},
 	year={2016},
 	month=sep, language={en} }

@article{Gelca_2001, title={On the relation between the {A}-polynomial and the {J}ones polynomial},
 	volume={130},
 	ISSN={0002-9939, 1088-6826},
 	url={https://www.ams.org/proc/2002-130-04/S0002-9939-01-06157-3/},
 	DOI={10.1090/S0002-9939-01-06157-3},
 	number={4},
 	journal={Proceedings of the American Mathematical Society},
 	author={Gelca, Răzvan},
 	year={2001},
 	month=sep, pages={1235–1241},
 	language={en} }

@article{Frohman_Gelca_LoFaro_2001, title={The {A}-polynomial from the noncommutative viewpoint},
 	volume={354},
 	ISSN={0002-9947, 1088-6850},
 	url={http://www.ams.org/tran/2002-354-02/S0002-9947-01-02889-6/},
 	DOI={10.1090/S0002-9947-01-02889-6},
 	number={2},
 	journal={Transactions of the American Mathematical Society},
 	author={Frohman, Charles and Gelca, Răzvan and LoFaro, Walter},
 	year={2001},
 	month=oct, pages={735–747},
 	language={en} }

@article{Garoufalidis_Le_2005, title={The colored {J}ones function is q-holonomic},
 	volume={9},
 	number={3},
 	journal={Geometry \& Topology},
 	author={Garoufalidis, Stavros and L{\^e}, Thang T.Q.},
 	year={2005},
 	pages={1253–1293} }

@article{Garoufalidis_Lauda_Le_2018, title={The colored {HOMFLYPT} function is q-holonomic},
  volume={167},
  ISSN={0012-7094, 1547-7398},
  url={https://projecteuclid.org/journals/duke-mathematical-journal/volume-167/issue-3/The-colored-HOMFLYPT-function-is-q-holonomic/10.1215/00127094-2017-0030.full},
  DOI={10.1215/00127094-2017-0030},
  abstractNote={We prove that the HOMFLYPT polynomial of a link colored by partitions with a fixed number of rows is a q-holonomic function. By specializing to the case of knots colored by a partition with a single row, it proves the existence of an (a,q) superpolynomial of knots in 3-space, as was conjectured by string theorists. Our proof uses skew-Howe duality that reduces the evaluation of web diagrams and their ladders to a Poincaré–Birkhoff–Witt computation of an auxiliary quantum group of rank the number of strings of the ladder diagram. The result is a concrete and algorithmic web evaluation algorithm that is manifestly q-holonomic.},
  number={3},
  journal={Duke Mathematical Journal},
  publisher={Duke University Press},
  author={Garoufalidis, Stavros and Lauda, Aaron D. and L{\^e}, Thang T. Q.},
  year={2018},
  month=feb, pages={397–447} }

@article{Le_2006, title={The colored {J}ones polynomial and the {A}-polynomial of Knots},
  volume={207},
  ISSN={0001-8708},
  url={https://www.sciencedirect.com/science/article/pii/S0001870806000156},
  DOI={10.1016/j.aim.2006.01.006},
  abstractNote={We study relationships between the colored Jones polynomial and the A-polynomial of a knot. The AJ conjecture (of Garoufalidis) that relates the colored Jones polynomial and the A-polynomial is established for a large class of two-bridge knots, including all twist knots. We formulate a weaker conjecture and prove that it holds for all two-bridge knots. Along the way we also calculate the Kauffman bracket skein module of the complements of two-bridge knots. Some properties of the colored Jones polynomial are established.},
  number={2},
  journal={Advances in Mathematics},
  author={L{\^e}, Thang T. Q.},
  year={2006},
  month=dec, pages={782–804} }

@inproceedings{Garoufalidis_2004, address={University of Texas at Austin, Texas, USA},
	title={On the characteristic and deformation varieties of a knot},
	ISSN={1464-8989, 1464-8997},
	url={http://www.msp.warwick.ac.uk/gtm/2004/07/p012.xhtml},
	DOI={10.2140/gtm.2004.7.291},
	abstractNote={The colored Jones function of a knot is a sequence of Laurent polynomials in one variable, whose nth term is the Jones polynomial of the knot colored with the n–dimensional irreducible representation of sl2 . It was recently shown by TTQ Le and the author that the colored Jones function of a knot is q –holonomic, ie, that it satisﬁes a nontrivial linear recursion relation with appropriate coeﬃcients. Using holonomicity, we introduce a geometric invariant of a knot: the characteristic variety, an aﬃne 1–dimensional variety in C2 . We then compare it with the character variety of SL2(C) representations, viewed from the boundary. The comparison is stated as a conjecture which we verify (by a direct computation) in the case of the trefoil and ﬁgure eight knots.},
	booktitle={Proceedings of the Casson Fest},
	publisher={Mathematical Sciences Publishers},
	author={Garoufalidis, Stavros},
	year={2004},
	month=sep, pages={291–309},
	language={en} }

@article{Brown_Dimofte_Garoufalidis_Geer_2020,
  title={The {ADO} Invariants are a q-Holonomic Family},
  url={http://arxiv.org/abs/2005.08176},
  note={arXiv: 2005.08176},
  journal={arXiv},
  author={Brown, Jennifer and Dimofte, Tudor and Garoufalidis, Stavros and Geer, Nathan},
  year={2020},
  month=may,
  language={en} }

@article{Mikhaylov_Witten_2015, title={Branes and Supergroups},
  volume={340},
  ISSN={1432-0916},
  url={https://doi.org/10.1007/s00220-015-2449-y},
  DOI={10.1007/s00220-015-2449-y},
  abstractNote={Extending previous work that involved D3-branes ending on a fivebrane with $${theta_{mathrm{YM}}neq 0}$$, we consider a similar two-sided problem. This construction, in case the fivebrane is of NS type, is associated to the three-dimensional Chern–Simons theory of a supergroup $${{rm U}(m|n)}$$or $${{rm OS}_{rm p}(m|2n)}$$rather than an ordinary Lie group as in the one-sided case. By S-duality, we deduce a dual magnetic description of the supergroup Chern–Simons theory; a slightly different duality, in the orthosymplectic case, leads to a strong-weak coupling duality between certain supergroup Chern–Simons theories on $${mathbb{R}^{3}}$$; and a further T-duality leads to a version of Khovanov homology for supergroups. Some cases of these statements are known in the literature. We analyze how these dualities act on line and surface operators.},
  number={2},
  journal={Communications in Mathematical Physics},
  author={Mikhaylov, Victor and Witten, Edward},
  year={2015},
  month=dec, pages={699–832},
  language={en} }

@article{Aghaei_Gainutdinov_Pawelkiewicz_Schomerus_2018, title={Combinatorial Quantisation of {GL(1|1) Chern-Simons} Theory {I}: The Torus},
  url={http://arxiv.org/abs/1811.09123},
  DOI={10.48550/arXiv.1811.09123},
  abstractNote={Chern-Simons Theories with gauge super-groups appear naturally in string theory and they possess interesting applications in mathematics, e.g. for the construction of knot and link invariants. This paper is the first in a series where we propose a new quantisation scheme for such super-group Chern-Simons theories on 3-manifolds of the form $Sigma times mathbb{R}$. It is based on a simplicial decomposition of an n-punctured Riemann surface $Sigma=Sigma_{g,n}$ of genus g and allows to construct observables of the quantum theory for any g and n from basic building blocks, most importantly the so-called monodromy algebra. In this paper we restrict to the torus case, i.e. we assume that $Sigma = T^2$, and to the gauge super-group G=GL(1|1). We construct the corresponding space of quantum states for the integer level k Chern-Simons theory along with an explicit representation of the modular group SL(2,Z) on these states. The latter is shown to be equivalent to the Lyubachenko-Majid action on the centre of a restricted version of the quantised universal enveloping algebra of the Lie super-algebra gl(1|1) at the primitive k-th root of unity.},
  note={arXiv:1811.09123 [hep-th, physics:math-ph]},
  number={arXiv:1811.09123},
  journal={arXiv},
  author={Aghaei, N. and Gainutdinov, A. M. and Pawelkiewicz, M. and Schomerus, V.},
  year={2018},
  month=nov }

@article{Mikhaylov_2015, title={Analytic Torsion, 3d Mirror Symmetry And Supergroup {Chern-Simons} Theories},
  url={http://arxiv.org/abs/1505.03130},
  DOI={10.48550/arXiv.1505.03130},
  abstractNote={We consider topological field theories that compute the Reidemeister-Milnor-Turaev torsion in three dimensions. These are the psl(1|1) and the U(1|1) Chern-Simons theories, coupled to a background complex flat gauge field. We use the 3d mirror symmetry to derive the Meng-Taubes theorem, which relates the torsion and the Seiberg-Witten invariants, for a three-manifold with arbitrary first Betti number. We also present the Hamiltonian quantization of our theories, find the modular transformations of states, and various properties of loop operators. Our results for the U(1|1) theory are in general consistent with the results, found for the GL(1|1) WZW model. We also make some comments on more general supergroup Chern-Simons theories.},
  note={arXiv:1505.03130 [hep-th]},
  journal={arXiv},
  author={Mikhaylov, Victor},
  year={2015},
  month=may }

@article{Gukov_2005, title={Three-Dimensional Quantum Gravity, {C}hern-{S}imons Theory, and the {A}-Polynomial},
	volume={255},
	ISSN={0010-3616, 1432-0916},
	url={http://arxiv.org/abs/hep-th/0306165},
	DOI={10.1007/s00220-005-1312-y},
	abstractNote={We study three-dimensional Chern-Simons theory with complex gauge group SL(2,C), which has many interesting connections with three-dimensional quantum gravity and geometry of hyperbolic 3-manifolds. We show that, in the presence of a single knotted Wilson loop in an infinite-dimensional representation of the gauge group, the classical and quantum properties of such theory are described by an algebraic curve called the A-polynomial of a knot. Using this approach, we find some new and rather surprising relations between the A-polynomial, the colored Jones polynomial, and other invariants of hyperbolic 3-manifolds. These relations generalize the volume conjecture and the Melvin-Morton-Rozansky conjecture, and suggest an intriguing connection between the SL(2,C) partition function and the colored Jones polynomial.},
	note={arXiv: hep-th/0306165},
	number={3},
	journal={Communications in Mathematical Physics},
	author={Gukov, Sergei},
	year={2005},
	month=apr, pages={577–627} }

@article{Gukov_Sulkowski_Awata_Fuji_2012, title={Volume conjecture: refined and categorified},
	volume={16},
	ISSN={1095-0761, 1095-0753},
	url={https://projecteuclid.org/euclid.atmp/1408562443},
	abstractNote={The generalized volume conjecture relates asymptotic behavior of the colored Jones polynomials to objects naturally defined on an algebraic curve, the zero locus of the AAA-polynomial A(x,y)A(x,y)A(x, y). Another “family version” of the volume conjecture depends on a quantization parameter, usually denoted qqq or ħħħ; this quantum volume conjecture (also known as the AJ-conjecture) can be stated in a form of a qqq-difference equation that annihilates the colored Jones polynomials and SL(2,ℂ)SL(2,C)SL(2, {mathbb C}) Chern–Simons partition functions. We propose refinements/categorifications of both conjectures that include an extra deformation parameter t and describe similar properties of homological knot invariants and refined BPS invariants. Much like their unrefined/decategorified predecessors, that correspond to t=−1t=−1t = -1, the new volume conjectures involve objects naturally defined on an algebraic curve Aref(x,y;t)Aref(x,y;t)A^{ref} (x, y; t) obtained by a particular deformation of the A-polynomial, and its quantization Aˆrefxˆ,yˆ;q,t)A^refx^,y^;q,t)widehat{A}^{ref} widehat{x},
	widehat{y}; q, t). We compute both classical and quantum ttt-deformed curves in a number of examples coming from colored knot homologies and refined BPS invariants.},
	note={Zbl: 1282.57016},
	number={6},
	journal={Advances in Theoretical and Mathematical Physics},
	author={Gukov, Sergei and Su{\l{}}kowski, Piotr and Awata, Hidetoshi and Fuji, Hiroyuki},
	year={2012},
	month=dec, pages={1669–1777},
	language={EN} }

@article{Anghel_Geer_Patureau-Mirand_2021, title={Relative (pre)-modular categories from special linear {L}ie superalgebras},
	volume={586},
	ISSN={0021-8693},
	url={https://www.sciencedirect.com/science/article/pii/S0021869321003434},
	DOI={10.1016/j.jalgebra.2021.06.035},
	abstractNote={We examine two different m-traces in the category of representations over the quantum Lie superalgebra associated to sl(m|n) at root of unity. The first m-trace is on the ideal of projective modules and leads to new Extended Topological Quantum Field Theories. The second m-trace is on the ideal of perturbative typical modules. We consider the quotient with respect to negligible morphisms coming from this m-trace and show that in the case of sl(2|1) this quotient leads to 3-manifolds invariants. We conjecture that the quotient category of perturbatives over quantum sl(m|n) leads to 3-manifold invariants and more generally ETQFTs.},
	journal={Journal of Algebra},
	author={Anghel, Cristina Ana-Maria and Geer, Nathan and {Patureau-Mirand}, Bertrand},
	year={2021},
	month=nov, pages={479–525} }

@inbook{Dimofte_Gukov_2011, address={Providence, Rhode Island},
	series={AMS/IP Studies in Advanced Mathematics},
	title={Quantum Field Theory and the Volume Conjecture},
	volume={50},
	url={https://doi.org/10.1090/amsip/050},
	booktitle={{C}hern-{S}imons Gauge Theory: 20 Years After},
	publisher={American Mathematical Society},
	author={Dimofte, Tudor and Gukov, Sergei},
	year={2011},
	month=jul, pages={19–42},
	collection={AMS/IP Studies in Advanced Mathematics} }

@article{Geer_Young_2022, title={Three dimensional topological quantum field theory from $U_q(\mathfrak{gl}(1 \vert 1))$ and $U(1 \vert 1)$ {Chern--Simons} theory},
	url={http://arxiv.org/abs/2210.04286},
	DOI={10.48550/arXiv.2210.04286},
	abstractNote={We introduce an unrolled quantization $U_q^E(mathfrak{gl}(1 vert 1))$ of the complex Lie superalgebra $mathfrak{gl}(1 vert 1)$ and use its categories of weight modules to construct and study new three dimensional non-semisimple topological quantum field theories. These theories are defined on categories of cobordisms which are decorated by ribbon graphs and cohomology classes and take values in categories of graded super vector spaces. Computations in these theories are enabled by a detailed study of the representation theory of $U_q^E(mathfrak{gl}(1 vert 1))$, both for generic and root of unity $q$. We argue that by restricting to subcategories of integral weight modules we obtain topological quantum field theories which are mathematical models of Chern--Simons theories with gauge supergroups $mathfrak{psl}(1 vert 1)$ and $mathfrak{gl}(1 vert 1)$ coupled to background flat $mathbb{C}^{times}$-connections, as studied in the physics literature by Rozansky--Saleur and Mikhaylov. In particular, we match Verlinde formulae and mapping class group actions on state spaces of non-generic tori with results in the physics literature. We also obtain explicit descriptions of state spaces of generic surfaces, including their graded dimensions, which go beyond results in the physics literature.},
	note={arXiv:2210.04286 [hep-th]},
	number={arXiv:2210.04286},
	journal={arXiv},
	author={Geer, Nathan and Young, Matthew B.},
	year={2022},
	month=dec }

@article{Kashaev_1997, title={The Hyperbolic Volume of Knots from the Quantum Dilogarithm},
	volume={39},
	ISSN={1573-0530},
	url={https://doi.org/10.1023/A:1007364912784},
	DOI={10.1023/A:1007364912784},
	abstractNote={The invariant of a link in three-sphere, associated with the cyclic quantum dilogarithm, depends on a natural number N. By the analysis of particularexamples, it is argued that, for a hyperbolic knot (link), the absolute valueof this invariant grows exponentially at large N, the hyperbolic volume of the knot (link) complement being the growth rate.},
	number={3},
	journal={Letters in Mathematical Physics},
	author={Kashaev, Rinat},
	year={1997},
	month=feb, pages={269–275},
	language={en} }

@article{Kashiwara_Schapira_2011, title={Deformation quantization modules},
	url={http://arxiv.org/abs/1003.3304},
	abstractNote={We study modules over stacks of deformation quantization algebroids on complex Poisson manifolds. We prove finiteness and duality theorems in the relative case and construct the Hochschild class of coherent modules. We prove that this class commutes with composition of kernels, a kind of Riemann-Roch theorem in the non-commutative setting. Finally we study holonomic modules on complex symplectic manifolds and we prove in particular a constructibility theorem.},
	journal={arXiv:1003.3304 [math]},
	author={Kashiwara, Masaki and Schapira, Pierre},
	year={2011},
	month=oct, language={en} }

@article{Jordan_Romaidis_2025, title={Finiteness and holonomicity of skein modules}, url={http://arxiv.org/abs/2509.22313}, DOI={10.48550/arXiv.2509.22313}, abstractNote={We prove a generalised version of finiteness of skein modules for 3-manifolds by including boundary. We show that internal skein modules are holonomic modules over the internal skein algebra of the boundary - a property including finite generation and a Lagrangian support condition. Our approach involves defining skein transfer bimodules, topologically obtained by 2-handle attachments, and establishing q-analogues of statements in D-module theory to prove preservation of holonomicity.}, note={arXiv:2509.22313 [math]}, journal={preprint}, author={Jordan, David and Romaidis, Iordanis}, year={2025}, month=sept }

@article{Brown_Jordan_2025, title={Parabolic skein modules}, url={http://arxiv.org/abs/2505.14836}, DOI={10.48550/arXiv.2505.14836}, abstractNote={We develop skein theory for 3-manifolds in the presence of codimension-one defects, focusing especially on defects arising from parabolic induction/restriction for quantum groups. We use these defects as a model for the quantum decorated character stacks of arXiv:2102.12283, thus extending them to 3-manifolds with surface defects. As a special case we obtain knot invariants closely related to the ``quantum $A$-polynomial", and we give a concrete method for computation resembling the approach of Dimofte and collaborators based on ideal triangulations and gluing equations.}, note={arXiv:2505.14836 [math]}, journal={arXiv}, author={Brown, Jennifer and Jordan, David}, year={2025}, month=may }

\end{document}